\newcommand{\R}{\mathbb{R}}
\newcommand{\Z}{\mathbb{Z}}
\newcommand{\EE}{\mathbb{E}}
\newcommand{\PP}{\mathbb{P}}
\newcommand{\CC}{\mathscr{C}}
\newcommand{\dd}{\mathrm{d}}
\newcommand{\vvv}{v} 
\newcommand{\argmin}{\mathop{\mathrm{arg \, min}}}
\newcommand{\arginf}{\mathop{\mathrm{arg \, inf}}}
\newcommand{\hit}{\mathrm{hit}}
\newcommand{\ZZ}[2]{Z^{(#1)}_{#2}}
\newcommand{\bb}[2]{b^{(#1)}_{#2}}
\newcommand{\UU}[2]{U^{(#1)}_{#2}}
\newcommand{\m}{m}
\newcommand{\lo}{l}
\newcommand{\nK}[1]{\| #1 \|_\mathsf{K}}
\newcommand{\A}{A}
\newcommand{\ttt}{\xi}
\newtheorem{theorem}{Theorem}
\newtheorem{lemma}{Lemma}
\newtheorem{proposition}{Proposition}
\newtheorem{corollary}{Corollary}
\newtheorem{definition}{Definition}
\title{On the Euler discretization error of Brownian motion about random times}
\author{A.~B.~Dieker\thanks{Postal address: Department of Industrial Engineering and Operations Research, Columbia University, New York, NY, 10027, USA}}
\affil{Columbia University}
\author{Guido R.~Lagos\thanks{Postal address: Faculty of Engineering and Sciences, Universidad Adolfo Ib\'a\~nez, Pe\~nalol\'en, Santiago, Chile}}
\affil{Universidad Adolfo Ib\'a\~nez}
\date{\today}
\begin{document}

\maketitle

\begin{abstract}
In this paper we derive weak limits for the discretization errors of sampling barrier-hitting and extreme events of Brownian motion by using the Euler discretization simulation method.
Specifically, we consider the Euler discretization approximation of Brownian motion to sample barrier-hitting events, i.e.~hitting for the first time a deterministic ``barrier'' function; and to sample extreme events, i.e.~attaining a minimum on a given compact time interval or unbounded closed time interval.
For each case we study the discretization error between the actual time the event occurs versus the time the event occurs for the discretized path, and also the discretization error on the position of the Brownian motion at these times.
We show limits in distribution for the discretization errors normalized by their convergence rate, and give closed-form analytic expressions for the limiting random variables.
Additionally, we use these limits to study the asymptotic behaviour of Gaussian random walks in the following situations: (1.) the overshoot of a Gaussian walk above a barrier that goes to infinity; (2.)~the minimum of a Gaussian walk compared to the minimum of the Brownian motion obtained when interpolating the Gaussian walk with Brownian bridges, both up to the same time horizon that goes to infinity; and (3.)~the global minimum of a Gaussian walk compared to the global minimum of the Brownian motion obtained when interpolating the Gaussian walk with Brownian bridges, when both have the same positive drift decreasing to zero.
In deriving these limits in distribution we provide a unified framework to understand the relation between several papers where the constant $-\zeta(1/2)/\sqrt{2 \pi}$ has appeared, where $\zeta$ is the Riemann zeta function.
In particular, we show that this constant is the mean of some of the limiting distributions we derive.
\\
\\
\emph{Keywords:} Euler discretization; Brownian motion; Gaussian walks; weak limits; Riemann zeta function\\
2010 Mathematics Subject Classification: Primary 60F05; Secondary 60G15
\end{abstract}

\section{Introduction}

Brownian motion is arguably the most important continuous-time stochastic process in probability theory.
Its relevance is only increased by its widespread use as a stochastic model in engineering, sciences and business.
On the other hand, the simulation of Brownian motion raises fundamental challenges, as it is a continuous-time process characterized by its rapid movement and self-similar-type structure in time-space, whereas computers can only simulate and store discrete objects.
This means that the simulation of Brownian motion is inherently inaccurate, except for a few especially structured collection of events where Brownian motion can be simulated without bias, see e.g.~\cite{devroye2010exact} for a survey on such special cases.

In this paper we consider the simulation of Brownian motion by approximating it on a constant, regularly spaced, time mesh; that is, we consider the {\it Euler discretization} of Brownian paths on a {\it regular} mesh.
This approximation is arguably the easiest and simplest possible simulation method for Brownian motion, which makes it appealing from a practical point of view.
Moreover, in some special practical applications there are exogenous conditions which makes the Euler discretization the most sensible simulation method to use; for example, in finance when a financial instrument can only be monitored at regular time intervals, see~\cite{broadie1997continuity}.

We are particularly interested in the simulation of {\it extreme events} and of {\it barrier-hitting events} of Brownian motion.
We call a {\it barrier-hitting event} an event where the Brownian motion ``hits'' or ``crosses'' for the first time a given ``barrier'', possibly time-dependent.
We call {\it extreme event} an event where a minimum (or maximum) of the Brownian motion over a closed time interval is attained; in this case the time interval can be bounded or unbounded.
These two types of events are of fundamental importance, both in the theory of stochastic processes, see e.g.~fluctuation theory, as well as in practice, where usually events of critical interest can be formulated as one of these events, e.g.~a stock price ever reaching a certain value, or a natural disaster occurring in a certain time horizon.

In this paper we study the accuracy of simulating barrier-hitting and extreme events of Brownian motion by instead simulating the respective event for the Euler discretized Brownian motion.
The accuracy of the Euler discretization is a non-trivial issue, since, except for trivial cases, with probability one none of these two events will ever occur exactly on a regular time mesh of the form $\{ 0, T/n, 2T/n, \ldots \}$ for constant $T$.
This raises the question of what theoretical conditions are there that guarantee accuracy of the Euler discretization.
This question motivates the main objective of this paper, which is: the study of convergence of the discretization error for these two events, as well as their rate of convergence, and weak convergence of the normalized errors.


Our work is motivated by a desire to understand an elusive connection between three important objects: (i) discretized Brownian motion, (ii) Gaussian random walks, i.e., random walks with iid normal step sizes, and (iii) the constant $-\zeta(1/2) / \sqrt{2 \pi}$, where $\zeta$ is the Riemann zeta function.
Indeed, this latter constant has appeared in a number of papers working with Brownian motion or Gaussian walks, see e.g.~\cite{chernoff1965sequential,siegmund1979corrected,chang1997ladder,janssen2007lerch,calvin1995average,asmussen1995discretization,broadie1997continuity}.
As~\cite{janssen2007lerch} point out, the connection between these works containing this particular constant has been unclear.
Moreover, the connection is especially intriguing since these works are about different topics in a variety of fields, e.g.~\cite{chernoff1965sequential} studies a problem of sequential analysis in statistics and~\cite{broadie1997continuity} studies option pricing in financial engineering.
Our work is largely motivated by elucidating the precise connection between the papers in the literature containing the constant $-\zeta(1/2) / \sqrt{2 \pi}$.
This paper explains how these works are connected.

\paragraph{Main contributions}
We consider barrier-hitting events of a Brownian motion $B$, where the barrier function is continuously differentiable and non-decreasing on $\R_+$, and which has initial position at time zero above the Brownian motion.
We also consider the extreme events where the Brownian motion attains a global minimum over $[0, \infty)$, or attains its minimum over an interval of the type $[0,a]$ for a given $a>0$ finite.
In any of these three cases denote for now, if they are finite, by $t^*$ the time at which the event occurs and $B(t^*)$ the position of the Brownian motion at such time; denote analogously by $t^n$ and $B^n(t^n)$ the respective values for the Euler discretization approximation $B^n$ of $B$ on the mesh $\{0, 1/n, 2/n, \ldots \}$.

A first contribution is that we show the following convergence results conditional on $t^*$ and $B(t^*)$ being finite.
We show that the normalized errors of time and position of the Brownian motion, i.e.~$n \left( t^n - t^* \right)$ and $\sqrt n \left( B^n(t^n) - B(t^*) \right)$, converge jointly in distribution and we give a closed-form analytic expression for the limiting random variables.
In the case of barrier-hitting events, the limiting random variable is related to the overshoot above zero of an ``equilibrium'' Gaussian random walk.
In the case of extreme events, the joint limiting random variable is the same in both cases, and involves the minimum of a two-sided Bessel process over the integers displaced by a uniform random variable.
Nonetheless, we remark that some of the derived limits in distribution were at least partially known in the literature.
Specifically, our Theorem~\ref{theo1} contains \cite[Lemma~10.10]{siegmund1985sequential} in the Gaussian walk case and also contains \cite[Lemma~4.2]{broadie1997continuity}; and our Theorem~\ref{theo2} contains \cite[Theorem~1]{asmussen1995discretization}.
Our work contributes in augmenting these results and also in giving a unified derivation of them.

A second contribution of this paper is that our Euler discretization limits in distribution allow to give rise to other limits in distribution for Gaussian random walks.
These limits are related to (1.) the overshoot of the Gaussian walk above a barrier that goes to infinity; (2.) the minimum of the Gaussian walk compared to the minimum of a Brownian motion, both up to the same time horizon that goes to infinity; and (3.) the global minimum of the Gaussian walk compared to the global minimum of a Brownian motion, when both have the same positive drift decreasing to zero.
The limiting distributions of these quantities are some the same limiting random variables obtained for the Euler discretization errors, since the limits are derived as simple corollaries of the Euler discretization analysis.
In particular, this allows us to give a crisper intuition of the limiting distributions obtained for the Euler discretization errors, since from the Gaussian walk perspective the limiting random variables can be understood as the equilibrium distributions of renewal processes.

A third contribution of this work, and perhaps the most important, is that we give a clean connection between several papers in the literature where the constant $-\zeta(1/2) / \sqrt{2 \pi}$ has appeared, where $\zeta$ is the Riemann zeta function.
This constant has appeared in works about Gaussian walks, \cite{siegmund1979corrected,chang1997ladder,janssen2007lerch,janssen2007cumulants}; discretized Brownian motion, \cite{calvin1995average,asmussen1995discretization,broadie1997continuity,broadie1999connecting}; and approximations of stochastic processes, \cite{chernoff1965sequential,siegmund1979corrected,comtet2005precise}.
We show that the constant $-\zeta(1/2) / \sqrt{2 \pi}$ is the mean of two of the limit random variables we derive; and moreover, either one of these two limit distributions appear in each of our six results, i.e.,~in Theorems~\ref{theo1}, \ref{theo2} and~\ref{theo3} and Corollaries~\ref{corol1}, \ref{corol2} and~\ref{corol3}.
On the other hand, we argue that in all the papers containing the constant $-\zeta(1/2) / \sqrt{2 \pi}$ the part where this term appears can be understood as extensions, particular cases or applications of one of our six results.
In this way our work allows to view these literature from a unified perspective, clarifying the relationship between them.





\paragraph{Literature review}

A variety of methods for simulation of Brownian motion exist, given its popularity and usefulness.
Sophisticated methods include {\it exact simulation} of some special quantities, see \cite{devroye2010exact}, and the simulation of approximations of Brownian motion that have path-wise guarantees of accuracy, see~\cite{beskos2012varepsilon}.
On the other hand, the Euler discretization approximation is a classic and simple approach to sample Brownian motion and more general diffusions, see e.g.~\cite{platen1999introduction} for a comprehensive exposition.
Nonetheless, the first approach along the lines of our work is~\cite{asmussen1995discretization}, who study the error of the Euler discretization approach to sample one-dimensional reflected Brownian motion.
We highlight the work of~\cite{broadie1997continuity,broadie1999connecting}, which uses the Euler discretization approximation to study the pricing of barrier and lookback options, respectively.
They justify their choice of the discretized approximation in that the financial instruments they study can only be monitored at a pre-specified regular time mesh and not in continuous-time.
\cite{calvin1997average} studies the discretization error of time and position of the extremes of Brownian motion over a compact time interval, but for the case when the discretization points are drawn from a Poisson point process.
\cite{bisewski2017controlling} consider non-equidistant discretization of Brownian motion to reduce the bias in the estimation of barrier-crossing probabilities.
In greater generality, the last decade has seen the development of several sophisticated methods for exact sampling of more general diffusion processes, see e.g.~\cite{beskos2005exact,beskos2006retrospective}.
In this line, a work related to ours is~\cite{etore2013exact}, who study exact simulation of diffusions that involve the local-time at zero; nonetheless, their work cannot be extended to ours since they do not treat reflected diffusions.

A central element of our work is the convergence in distribution of the Brownian motion when ``zoomed-in'' in time-space about random times.
The study of this convergence heavily relies on path decompositions of Brownian motion; \cite{williams1974path} is the quintessential reference in this line.
For our particular study we use decompositions of Brownian motion about its global minima as Bessel process, see~\cite{rogers1981markov}, \cite{bertoin1991decomposition}; and also decomposition about its local minima, see~\cite{asmussen1995discretization} and~\cite{imhof1984density}.
A related work is~\cite{chaumont1996conditionings}, who studies the pre- and post-minimum paths of L\'evy processes, but when process is conditioned to stay positive.
In our case we do not condition on the process staying positive.

In several works in the literature the constant $-\zeta(1/2) / \sqrt{2 \pi}$ has appeared, where $\zeta$ is the Riemann zeta function.
These works essentially deal with extreme events of discretized Brownian motion, e.g.~\cite{asmussen1995discretization}, \cite{calvin1995average}, \cite{broadie1997continuity,broadie1999connecting}; with Gaussian random walks, e.g.~\cite{janssen2007lerch}, \cite{chang1997ladder}; and with approximations of stochastic processes, e.g.~\cite{chernoff1965sequential}, \cite{siegmund1979corrected}, \cite{comtet2005precise}.
More generally, \cite{biane2001probability} gives an overview of multiple random variables related to Brownian motion where the Riemann zeta and Jacobi's theta functions appear.

\paragraph{Outline}
In Section~\ref{ch3:sec:main results} we show the main results of this paper concerning the Euler discretization error of Brownian motion about random times of interest.
In detail, in Section~\ref{ch3:subsec:hit} we treat the error for barrier-hitting events; in Section~\ref{ch3:subsec:min a} the error for extreme events on compact time intervals, and in Section~\ref{ch3:subsec:min infty} we treat the error for extreme events on unbounded closed time intervals.

In Section~\ref{ch3:sec:gaussian} we transform the convergence in distribution of the normalized Euler errors into asymptotic results for Gaussian walks.
More precisely, in Section~\ref{ch3:sec:gaussian 1} we analyze the overshoot above a barrier when the barrier goes to infinity; in Section~\ref{ch3:sec:gaussian 2} we study the minimum of a Gaussian walk compared to the minimum of a Brownian motion, both up to the same time horizon that goes to infinity; and in Section~\ref{ch3:sec:gaussian 3} we analyze the global minimum of the Gaussian walk compared to the global minimum of a Brownian motion, when both have the same positive drift decreasing to zero.

In Section~\ref{sec:zeta} we show how our results provide a framework unifying several works in the literature where the constant $-\zeta(1/2) / \sqrt{2 \pi}$ has appeared, where $\zeta$ is the Riemann zeta function.

Finally, Section~\ref{ch3:sec:proof} is dedicated to the proofs of the main results in Section~\ref{ch3:sec:main results}.
For that, in Section~\ref{ch3:subsec:errors as maps} we write the discretization errors as mappings of Brownian ``zoomed-in'' about the random time of interest; then in Section~\ref{ch3:subsec:zoomed-in converge K} we prove that such zoomed-in processes converge in distribution; and in Section~\ref{ch3:subsec:proof} we give the actual proofs of the main results of this paper, Theorems~\ref{theo1}, \ref{theo2} and~\ref{theo3} of Section~\ref{ch3:sec:main results}.

\paragraph{Notation}
We denote as $\R_+$ and $\Z_+$ the nonnegative real numbers and integer numbers, respectively.
The function $\lceil x \rceil$ denotes the integer part of the number $x$.
Also, $\CC(\R)$ and $\CC(\R_+)$ are the set of real functions defined on all real numbers and on the nonnegative real numbers, respectively.
We call {\it standard Brownian motion} and {\it standard Bessel(3) process} as the Brownian motion and Bessel(3) process, respectively, with no drift and unit variance.
Unless otherwise stated, we assume that all process start from zero at time zero.

\section{Main results}\label{ch3:sec:main results}

In this section we show our main results about the error convergence when simulating barrier hitting and extreme value events of Brownian motion by using an Euler discretization approach.
In Section~\ref{ch3:subsec:hit} we focus on the error of estimating barrier hitting events with the Euler discretization.
In Section~\ref{ch3:subsec:min a} we consider the error when sampling extreme values of the Brownian motion over a fixed finite horizon.
In Section~\ref{ch3:subsec:min infty} we extend the analysis to extreme values over an infinite horizon.

Consider a Brownian motion $B = \left( B(t) : t \in \R_+ \right)$ with constant drift $\mu$ and variance $\sigma^2$.
For $n > 0$ integer, let $B^n = \left( B^n (t) : t \in \R_+ \right)$ be the piecewise constant process defined as
\begin{eqnarray}\label{ch3:def:Bn}
B^n (t) &:=& B \left( \lfloor n t \rfloor / n \right) \quad \text{for all} \quad t \in \R_+;
\end{eqnarray}
 that is, $B^n$ is the Euler discretization of $B$ on the mesh $\left\{ 0, \ 1/n, \ 2/n, \ \ldots \right\}$.

\subsection{Error of barrier hitting times}\label{ch3:subsec:hit}

We consider first the error of estimating barrier hitting times using the Euler discretization of the Brownian motion.
Consider a deterministic barrier function $b = (b(t) : t \geq 0)$ satisfying the following assumptions:
\begin{itemize}
	\item[(H$_b$)] The function $b: \R \to \R$ is continuous and nondecreasing on $\R_+$, continuously differentiable on $\R_+ \!\! \setminus \!\! \{ 0 \}$, and $b(0) \geq 0$. 
\end{itemize}
We want to estimate the time
\begin{eqnarray}\label{ch3:def:tau b}
\tau_b &:=& \inf \left\{ t \in (0,\infty) : B(t) \geq b(t) \right\};
\end{eqnarray}
for which we use the approximation
\begin{eqnarray}\label{ch3:def:tau b n}
\tau_b^n &:=& \inf \left\{ t \in (0,\infty) : B^n(t) \geq b(t) \right\},
\end{eqnarray}
where $B^n$ is the discretized version of $B$ defined in \eqref{ch3:def:Bn}.
Here we use the usual convention that $\inf \emptyset = +\infty$.
Our objective is to study the error of $\tau_b^n - \tau_b$ and $B^n(\tau_b^n) - B(\tau_b)$.
The following result establishes the convergence rate and limiting distribution of these errors.

\begin{theorem}\label{theo1}
Consider a barrier function $b = (b(t) : t \geq 0)$ satisfying (H$_b$).
Conditioned on the event $\left\{ \tau_b < \infty \right\}$, as $n \to \infty$ the triplet
\begin{eqnarray}\label{ch3:theo:hit b}
& \left( n \left( \tau_b^n - \tau_b \right) , \quad \sqrt n \left( B^n \left( \tau_b^n \right) - B \left( \tau_b \right) \right) , \quad \lceil n \tau_b \rceil - n \tau_b \right),
\end{eqnarray}
converges jointly in distribution to the triplet
\begin{eqnarray}\label{ch3:theo:lim hit b}
& \left( U+\min\{k \geq 0 : W(U+k)>0\} , \quad \sigma W \left( U+\min\{k \geq 0 : W(U+k)>0\} \right) , \quad U \right),
\end{eqnarray}
where $W = \left( W(t) : t \in \R_+ \right)$ is a standard Brownian motion independent of $B$, and $U$ is a uniformly distributed random variable on $(0,1)$ which is independent of $B$ and $W$.
\end{theorem}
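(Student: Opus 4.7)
\emph{Proof strategy.} The plan is to rewrite the discretization errors as explicit functionals of two asymptotically decoupled objects: the rescaled post-$\tau_b$ increment process of the Brownian motion, and the fractional mesh offset $U_n:=\lceil n\tau_b\rceil - n\tau_b$. Because $b$ is nondecreasing on $\R_+$ and $B^n$ is constant on every interval $[k/n,(k+1)/n)$, on $\{\tau_b<\infty\}$ one sees that almost surely $\tau_b$ lies strictly between two mesh points and that $\tau_b^n$ is itself a mesh point strictly larger than $\tau_b$. Letting $J_n := \min\{j\ge 0 : B((\lceil n\tau_b\rceil +j)/n)\ge b((\lceil n\tau_b\rceil +j)/n)\}$, this gives $\tau_b^n=(\lceil n\tau_b\rceil + J_n)/n$, hence $n(\tau_b^n-\tau_b)=U_n+J_n$ and $B^n(\tau_b^n)-B(\tau_b)=B(\tau_b+(U_n+J_n)/n)-B(\tau_b)$. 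The proof then reduces to identifying the joint limit of $U_n$ with the post-$\tau_b$ Brownian motion evaluated at the mesh points $\tau_b+(U_n+j)/n$.

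\emph{Joint zoom-in limit.} By the strong Markov property at the stopping time $\tau_b$ (a.s.~finite on the conditioning event), $\hat B(s):=B(\tau_b+s)-B(\tau_b)$ is a Brownian motion with drift $\mu$ and variance $\sigma^2$, independent of $\FF_{\tau_b}$. Brownian scaling gives $\tilde B_n(s):=\sqrt n\,\hat B(s/n)\stackrel{d}{=}\sigma W(s)+\mu s/\sqrt n$, so $\tilde B_n \Rightarrow \sigma W$ in $\CC(\R_+)$, with $W$ a standard Brownian motion independent of $\FF_{\tau_b}$. The law of $\tau_b$ conditional on $\{\tau_b<\infty\}$ is absolutely continuous on $(0,\infty)$ (a standard fact for first crossings of a $C^1$ nondecreasing barrier by Brownian motion), so Weyl equidistribution yields $U_n \Rightarrow U \sim \mathrm{Unif}(0,1)$. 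Combining these with independence of $\tilde B_n$ from $\FF_{\tau_b}$, one obtains the joint weak convergence $(\tilde B_n,U_n) \Rightarrow (\sigma W,U)$ with $W$ and $U$ independent.

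\emph{Continuous mapping.} The count $J_n$ may now be rewritten as
\[ J_n = \min\bigl\{j\ge 0 :\ \tilde B_n(U_n+j)\ \ge\ \sqrt n\,[\,b(\tau_b+(U_n+j)/n)-b(\tau_b)\,]\bigr\}, \]
and $C^1$ regularity of $b$ at $\tau_b$ makes the threshold on the right equal to $(U_n+j)\,b'(\tau_b)/\sqrt n + o(1/\sqrt n)\to 0$ uniformly for $j$ in any compact set. Recurrence of $W$ makes $J^*:=\min\{k\ge 0:W(U+k)>0\}$ almost surely finite, which gives tightness of $J_n$. Since the event $\{W(U+k)\neq 0\text{ for every }k\in\Z_+\}$ has full probability, the functional $(w,u)\mapsto u+\min\{k\ge 0:w(u+k)>0\}$ is continuous at $(\sigma W,U)$ almost surely. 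The continuous mapping theorem, together with joint uniform-on-compacts convergence of $\tilde B_n$, then delivers $n(\tau_b^n-\tau_b)=U_n+J_n\Rightarrow U+J^*$, $\sqrt n(B^n(\tau_b^n)-B(\tau_b))=\tilde B_n(U_n+J_n)\Rightarrow \sigma W(U+J^*)$, and $U_n\Rightarrow U$, jointly, which is the claim.

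\emph{Main obstacle.} The technical crux is the continuity analysis in the final step: proving continuity of the ``first index where the rescaled path exceeds the rescaled barrier'' functional in the relevant topology, controlling tightness of $J_n$, and justifying the evaluation $\tilde B_n(U_n+J_n)\to \sigma W(U+J^*)$ through uniform-on-compacts convergence together with convergence of the random indices. A secondary ingredient that requires justification is the absolute continuity of the law of $\tau_b$ under (H$_b$), needed for the Weyl-equidistribution limit of the fractional offsets.
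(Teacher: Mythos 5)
Your proposal is correct and follows essentially the same route as the paper: rewrite the errors as a functional of the rescaled post-$\tau_b$ increment process, the fractional offset $U_n=\lceil n\tau_b\rceil-n\tau_b$, and the rescaled barrier; establish their joint weak limit (strong Markov plus Brownian scaling for the path, absolute continuity of $\tau_b$ for $U_n$, and the mean value theorem under (H$_b$) for the vanishing barrier term); then conclude via the continuous mapping theorem using almost-sure continuity of the first-positive-index functional at the limit. The only cosmetic difference is that you work with the one-sided post-$\tau_b$ process, whereas the paper's zoom-in lemma records a two-sided limit whose Bessel(3) pre-$\tau_b$ part the hitting functional never actually uses.
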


We remark that in Section~\ref{ch3:sec:gaussian 1} we give another limit random variable that has the same distribution of~\eqref{ch3:theo:lim hit b}.
Moreover, this characterization connects the random variable~\eqref{ch3:theo:lim hit b} to the distribution of the {\it ladder heights} of a Gaussian random walk; see Section~\ref{ch3:sec:gaussian 1} for further details.

\subsection{Error of minimum on finite horizon}\label{ch3:subsec:min a}

We consider now the error of estimating the minimum on bounded time intervals by using the Euler discretization of the Brownian motion.
Consider a time interval $[0, a]$ such that $0 < a < \infty$.
We want to estimate the time
\begin{eqnarray}\label{ch3:def:T min fin}
T_{\min,a} &:=& \inf \left\{ t \in [0,a] : B (t) = \min_{s \in [0,a]} B (s) \right\};
\end{eqnarray}
for which we use the approximation
\begin{eqnarray}\label{ch3:def:T min fin n}
T_{\min,a}^n &:=& \inf \left\{ t \in [0,a] : B^n (t) = \min_{s \in [0,a]} B^n (s) \right\};
\end{eqnarray}
where the discretized version $B^n$ is defined in \eqref{ch3:def:Bn}.
Here $T_{\min,a}$ is the almost surely unique value satisfying $B (T_{\min,a}) = \min_{s \in [0,a]} B (s)$.
Nonetheless, for all $t$ in $[ T_{\min,a}^n, \ T_{\min,a}^n+1/n )$ it holds that $B^n (t) = \min_{s \in [0,a]} B^n (s)$ because $B^n$ is defined in~\eqref{ch3:def:Bn} as a \emph{piecewise constant} interpolation of $B$.

We want to study the error of $T_{\min,a}^n - T_{\min,a}$ and $B^n(T_{\min,a}^n) - B(T_{\min,a})$.
The following result establishes the limiting distribution and convergence rate of these errors.

\begin{theorem}\label{theo2}
The triplet
\begin{eqnarray}\label{ch3:theo:min a}
& \left( n \left( T_{\min,a}^n - T_{\min,a} \right) , \quad \sqrt n \left( B^n \left( T_{\min,a}^n \right) - B \left( T_{\min,a} \right) \right) , \quad \lceil n T_{\min,a} \rceil - n T_{\min,a} \right),
\end{eqnarray}
converges jointly in distribution to the triplet
\begin{eqnarray}\label{ch3:theo:lim min a}
& \left( U+\argmin_{k \in \Z} R (U+k) , \quad \sigma \min_{k \in \Z} R (U+k) , \quad U \right),
\end{eqnarray}
where $R  = \left( R (t) : t \in \R \right)$ is a two-sided Bessel(3) process and $U$ is a uniformly distributed random variable on $(0,1)$ which is independent of $R$.
Here we have abused notation by denoting as $\argmin$ in \eqref{ch3:theo:lim min a} the almost surely unique value $k$ at which the minimum is attained.
\end{theorem}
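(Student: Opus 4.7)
The plan is to express each of the three errors in~\eqref{ch3:theo:min a} as a deterministic functional of a rescaled (``zoomed-in'') version of $B$ about the random time $T_{\min,a}$, together with the fractional offset of $nT_{\min,a}$ modulo the grid, and then to identify the joint weak limit of this pair. Concretely, set $U_n := \lceil n T_{\min,a} \rceil - n T_{\min,a} \in [0,1)$ and
\[
Z_n(s) := \sqrt{n}\bigl(B(T_{\min,a}+s/n) - B(T_{\min,a})\bigr),
\]
defined on a random interval containing $0$. In the variable $s = n(t - T_{\min,a})$ the grid point $(\lceil n T_{\min,a}\rceil + k)/n$ becomes $s = U_n + k$, so a direct rearrangement of~\eqref{ch3:def:Bn}--\eqref{ch3:def:T min fin n} yields
\begin{align*}
\sqrt{n}\bigl(B^n(T_{\min,a}^n) - B(T_{\min,a})\bigr) &= \min_{k \in I_n} Z_n(U_n+k),\\
n\bigl(T_{\min,a}^n - T_{\min,a}\bigr) &= U_n + \argmin_{k \in I_n} Z_n(U_n+k),
\end{align*}
where $I_n$ is the (random) set of integers $k$ for which the grid point $T_{\min,a} + (U_n+k)/n$ lies in $[0,a]$.

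I would then prove the joint weak convergence $(Z_n, U_n) \Rightarrow (\sigma R, U)$ in $\CC(\R) \times [0,1)$, with $R$ a standard two-sided Bessel(3) process independent of $U \sim \mathrm{Unif}(0,1)$. The convergence of $Z_n$ rests on the classical Williams path decomposition of Brownian motion about its minimum: conditionally on $(T_{\min,a}, B(T_{\min,a}))$, the time-reversed pre-minimum fragment and the post-minimum fragment of $B$ on $[0,a]$ are independent Bessel(3) processes run until they hit the boundary of the interval. Zooming in by factor $n$ in time and $\sqrt{n}$ in space preserves only the first $O(1)$ units on each side, and Brownian/Bessel scaling produces a two-sided Bessel(3) process of variance $\sigma^2$ in the limit. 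The uniformity of $U$ and its independence from $R$ come from the smooth density of $T_{\min,a}$ on $(0,a)$: the sequence $n T_{\min,a} \bmod 1$ equidistributes on $[0,1)$ and decouples from any bounded-range local functional of $B$ around $T_{\min,a}$. Once this convergence is in hand, I would invoke a continuous-mapping argument: since $\sigma R(s)$ grows like $\sqrt{|s|}$ as $|s|\to\infty$, the infimum of $f(u+k)$ over $k\in\Z$ is a.s.\ attained at a unique integer $k^*$ lying in a random but a.s.\ finite window when $f = \sigma R$, so the maps $(f,u)\mapsto \min_{k\in\Z} f(u+k)$ and $(f,u)\mapsto \argmin_{k\in\Z} f(u+k)$ are continuous at almost every realization of $(\sigma R, U)$. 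The third coordinate of~\eqref{ch3:theo:min a} is $U_n$ itself and presents no further work.

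The main technical obstacle I anticipate is twofold. First, one must replace the random index set $I_n$ by all of $\Z$ in the limit: this requires tightness of the empirical argmin, combined with the fact that $T_{\min,a}\in(0,a)$ almost surely, so that with probability tending to one the argmin over $I_n$ coincides with the argmin over any sufficiently large window of $\Z$, and outside that window the growth of the limiting Bessel(3) process ensures no competitor can take over. Second, the decoupling of $U_n$ from $Z_n$ is delicate because both are built from the same Brownian path; I would handle this by conditioning on $(T_{\min,a}, B(T_{\min,a}))$, rewriting $Z_n$ in terms of the two independent Bessel(3) fragments supplied by the Williams decomposition, and then arguing that these fragments depend on $(T_{\min,a}, B(T_{\min,a}))$ in a way that is asymptotically insensitive to the fractional part $n T_{\min,a}\bmod 1$ by the smoothness of the joint law. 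This is precisely the content of the zoomed-in convergence result announced in Section~\ref{ch3:subsec:zoomed-in converge K} of the outline.
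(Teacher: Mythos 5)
Your proposal follows essentially the same route as the paper: rewrite the three errors as a functional of the zoomed-in process and the fractional offset, establish joint weak convergence to $(\sigma R, U)$ via the Williams decomposition and the equidistribution of $nT_{\min,a} \bmod 1$, and then pass to the limit by a continuous-mapping argument whose non-compactness issue (the $\argmin$/$\min$ over all of $\Z$ is not $\nK{\cdot}$-continuous) is resolved exactly as you anticipate, by truncating to windows $[-\A,\A]$ and using a uniform-in-$n$ tail estimate for the zoomed-in process outside the window (Lemma~4 of \cite{asmussen1995discretization}) to justify the interchange of limits. The only point to make precise in a full write-up is that this last tightness estimate is a statement about the prelimit processes $Z_n$ and must be proved separately; it does not follow from the $\sqrt{|s|}$ growth of the limiting Bessel(3) process alone.
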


We remark that the convergence of the second component of the triplet in~\eqref{ch3:theo:min a} corresponds to~\cite[Theorem~1]{asmussen1995discretization}.
Their limit however contains an extra time term which in our framework would correspond to $\sqrt a$; this term appears because they consider a discretization with step size $a/n$, whereas we consider $1/n$.
The objective of~\cite{asmussen1995discretization} is to study the Euler discretization error $\Gamma B(t) - \Gamma B^n(t)$ of the {\it reflected Brownian motion} $\Gamma B$.
Here, $\Gamma$ is the {\it reflection mapping}, defined for all trajectory $X$ with $X(0)=0$ as
$$\Gamma X (t) := X(t) - \inf_{s \in [0,t]} X(s),$$
Our work extends theirs in the following way.
Let the {\it busy period} mapping $\Gamma'$ be
$$\Gamma' X (t) := t - \sup \left\{ s \in [0,t] : \Gamma X(s) = 0 \right\} = t - \sup \left\{ s \in [0,t] : X(s) = \inf_{u \in [0,s]} X(u) \right\}$$
for $X$ trajectory with $X(0)=0$.
Alternatively, $\Gamma'$ can be seen as the {\it drift derivative} of the reflection mapping; see \cite{dieker2014sensitivity} for further details on the latter process.
It is easy to show that if $B^n$ is the Euler discretization with stepsize $1/n$ as defined in \eqref{ch3:def:Bn} then
\begin{eqnarray*}
& n \left( \Gamma' B(t) - \Gamma' B^n(t) \right) = 1+n \left( T_{\min,t}^n - T_{\min,t} \right)
\end{eqnarray*}
and
\begin{eqnarray*}
& \sqrt n \left( \Gamma B(t) - \Gamma B^n(t) \right) = \sqrt n \left( B^n \left( T_{\min,t}^n \right) - B \left( T_{\min,t} \right) \right).
\end{eqnarray*}
In particular, Theorem~\ref{theo2} gives the joint limit in distribution of these Euler discretization errors.

Another related work is \cite{calvin1997average}, which also studies the discretization error of time and position of the extremes of Brownian motion over a compact time interval.
Nonetheless, his work is different from ours in that he considers the case when the discretization points are drawn from a Poisson point process.

\subsection{Error of minimum on an infinite horizon}\label{ch3:subsec:min infty}

We consider now the error of estimating the minimum on unbounded time intervals by using the Euler discretization of the Brownian motion.
We want to estimate the time
\begin{eqnarray}\label{ch3:def:T min infty}
T_{\text{min},\infty} &:=& \inf \left\{ t \in [0,\infty) : B (t) = \min_{u \in [0,\infty)} B (u) \right\};
\end{eqnarray}
which we approximate as follows by using the discretized path $B^n$ defined in \eqref{ch3:def:Bn}:
\begin{eqnarray}\label{ch3:def:T min inf n}
T_{\text{min},\infty}^n &:=& \inf \left\{ t \in [0,\infty) : B^n (t) = \min_{u \in [0,\infty)} B^n (u) \right\}.
\end{eqnarray}
For that, we study the errors $T_{\text{min},\infty}^n - T_{\text{min},\infty}$ and $B^n(T_{\text{min},\infty}^n) - B(T_{\text{min},\infty})$.
The following result establishes their convergence rate and limiting distribution.

\begin{theorem}\label{theo3}
As $n \to \infty$, the triplet
\begin{eqnarray}\label{ch3:theo:min infty}
& \left( n \left( T_{\text{min},\infty}^n - T_{\text{min},\infty} \right) , \quad \sqrt n \left( B^n(T_{\text{min},\infty}^n) - B(T_{\text{min},\infty}) \right) , \quad \lceil n T_{\text{min},\infty} \rceil \! - \! n T_{\text{min},\infty} \right),
\end{eqnarray}
converges jointly in distribution to the same triplet in \eqref{ch3:theo:lim min a}.
\end{theorem}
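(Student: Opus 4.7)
The plan is to follow the template already used for Theorem~\ref{theo2}: express the three coordinates of \eqref{ch3:theo:min infty} as continuous functionals of a zoomed-in process $Z_n$ and a fractional-part random variable $U_n$, prove the joint convergence $(Z_n, U_n) \Rightarrow (\sigma R, U)$ with $R$ a standard two-sided Bessel(3) independent of $U \sim \mathrm{Unif}(0,1)$, and invoke the continuous mapping theorem. Throughout I implicitly restrict to $\mu > 0$, since otherwise $T_{\mathrm{min},\infty}$ is almost surely infinite. Setting $\tau := T_{\mathrm{min},\infty}$, $U_n := \lceil n\tau \rceil - n\tau$, and
\[
Z_n(s) := \sqrt{n}\,\bigl(B(\tau + s/n) - B(\tau)\bigr), \qquad s \in [-n\tau, \infty),
\]
extended by $0$ for $s < -n\tau$, the grid points $k/n$ correspond in the zoomed coordinate to $U_n + j$ with $j \in \Z$, $j \geq -\lceil n\tau \rceil$. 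A direct computation then yields $n(T_{\mathrm{min},\infty}^n - \tau) = U_n + j^\star_n$ with $j^\star_n := \argmin_j Z_n(U_n + j)$, and $\sqrt{n}(B^n(T_{\mathrm{min},\infty}^n) - B(\tau)) = Z_n(U_n + j^\star_n)$, so the triplet is an explicit measurable map of $(Z_n, U_n)$.

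The main probabilistic input will be Williams' path decomposition of Brownian motion with positive drift about its global minimum (cf.~\cite{williams1974path, bertoin1991decomposition, rogers1981markov}): conditional on $(\tau, B(\tau))$, the post-minimum process $(B(\tau+s)-B(\tau))_{s\geq 0}$ is a Bessel(3) process with drift $\mu$ and variance $\sigma^2$, and independently the time-reversed pre-minimum process $(B(\tau-s)-B(\tau))_{0\leq s\leq \tau}$ is a Bessel(3) with the same parameters, killed on hitting $|B(\tau)|$. Brownian-style scaling then delivers, on any fixed compact window $[-K,K]$, the distributional convergence $Z_n|_{[-K,K]} \Rightarrow \sigma R|_{[-K,K]}$, because the drift enters at the vanishing order $\mu/\sqrt{n}$ and the pre-minimum horizon satisfies $\tau > K/n$ with probability tending to $1$. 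Joint convergence with $U_n \Rightarrow U \sim \mathrm{Unif}(0,1)$, independent of the limit $R$, follows from a standard equidistribution argument: the fractional part of $n\tau$ randomizes uniformly because $\tau$ has a continuous law and asymptotically decouples from the infinitesimal local picture of $B$ about $\tau$. I would reuse here the framework developed for Theorem~\ref{theo2} in Section~\ref{ch3:subsec:zoomed-in converge K}.

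Given the joint convergence, the continuous mapping step is routine on trajectories where $k \mapsto R(U+k)$ has a unique attained minimum on $\Z$, which holds almost surely. The main obstacle will be upgrading the convergence on compact windows to convergence of the \emph{global} argmin and min over $\Z$: one must preclude the possibility that the grid minimum of $B$ is attained at a $k$ with $|k - \lceil n\tau \rceil|$ growing with $n$. The required tightness is forced by the positive drift $\mu > 0$ together with the global-minimum property of $\tau$: outside the window $|t - \tau| \leq K/n$, the rescaled increments $\sqrt{n}(B(t) - B(\tau))$ exceed any fixed level as $K \to \infty$, uniformly in $n$. Quantifying this tail estimate uniformly in $n$ is the quantitative heart of the argument, and it is precisely what Section~\ref{ch3:subsec:zoomed-in converge K} is designed to supply. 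Combined with Williams' decomposition and equidistribution of $n\tau \bmod 1$, the continuous mapping theorem then yields Theorem~\ref{theo3}.
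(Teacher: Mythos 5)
Your proposal follows the same route as the paper: write the triplet as a map of the zoomed-in process $\ZZ{n}{\min,\infty}$, the fractional part $\UU{n}{\min,\infty}$ and $T_{\min,\infty}$ (the paper's Lemma~\ref{ch3:lemma:decomp}(iii)); obtain weak convergence of the zoomed-in process to $\sigma R$ via Williams'/Rogers--Pitman decompositions about the global minimum plus scaling, and of the fractional part to an independent uniform (Lemmas~\ref{ch3:lemma:scalings ucocs}(iii) and~\ref{ch3:lemma:U convergence}); and finish with the continuous mapping theorem after truncating to compact index windows $[-\A,\A]$ and controlling the tail of the argmin location uniformly in $n$, which is exactly the interchange-of-limits step the paper handles via the estimate~\eqref{ch3:lim:AGP95 lemma 2}. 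The approach and all key ingredients match the paper's proof, which is carried out for Theorem~\ref{theo2} and declared analogous for Theorem~\ref{theo3}.
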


We remark that we are {\it not} claiming that both triplets \eqref{ch3:theo:min a} and \eqref{ch3:theo:min infty} converge {\it jointly} together to the same limit~\eqref{ch3:theo:lim min a}.

\section{Extension to Gaussian walks}\label{ch3:sec:gaussian}
 
In this section we extend our weak convergence error results to the setting of Gaussian walks.
A {\it Gaussian walk} is a discrete time stochastic process $S = \left( S_k : k \geq 0 \right)$ where $S_k := \sum_{i=1}^k X_i$ and the increments $X_i$ are iid normal random variables.
We will see that several important phenomena of these processes can be described using the limiting distributions found in the Theorems~\ref{theo1}, \ref{theo2} and \ref{theo3}.

\subsection{Corollary 1: limiting overshoot for increasing barrier}\label{ch3:sec:gaussian 1}

Consider a Gaussian walk $S = \left( S_n : n \geq 0 \right)$ starting at zero, with nonnegative drift $\EE S_1 = \nu \geq 0$ and with variance $\EE (S_1-\nu)^2 = \sigma^2$.
Assume that the probability space is sufficiently rich so that there exists a Brownian motion $B = \left( B(t) : t \geq 0 \right)$ starting from zero, with zero drift and variance $\sigma^2$, and such that almost surely $S_n = B(n)$ for all $n$.

We want to analyze ``how well'' a barrier-hitting event of the Brownian motion $B$ approximates the analogue event of the Gaussian walk $S$.
More specifically, defining for any fixed ``barrier'' $m>0$
$$\tau_m^S := \min \left\{ k \geq 0 : S_k \geq m \right\} \qquad \text{and}\qquad \tau_m^B := \min \left\{ t\geq 0 : B(t) \geq m \right\},$$
we want to study the errors $S_{\tau_m^S} - B \left( \tau_m^B \right)$ and $\tau_m^S - \tau_m^B$ as $m$ increases.

\begin{corollary}\label{corol1}
As $m \to \infty$ the error pair
\begin{eqnarray}\label{theo:cor 1 exp}
& \left(
\tau_m^S - \tau_m^B , \quad
S_{\tau_m^S} - B \left( \tau_m^B \right)
\right)\end{eqnarray}
converges in distribution to the pair
\begin{eqnarray}\label{theo:cor 1 lim}
& \left(
U+\tau_+, \quad
\sigma S^*_{\tau_+}
\right).
\end{eqnarray}
Here, $S^* = \left( S^*_k : k \geq 0 \right)$ is a modified Gaussian walk, defined as $S^*_0 := \sqrt U X^*_0$ and $S^*_{k+1} := S^*_k + X^*_{k+1}$ for $k \geq 0$, where $(X^*_k)_{k \geq 0}$ are iid standard normal random variables and $U$ is a uniform random variable on $(0,1)$ independent of $(X^*_k)_{k \geq 0}$.
Also,
$$\tau_+ = \tau_+^{S^*} := \min \left\{ k \geq 0 : S^*_k > 0 \right\},$$
that is, $\tau_+$ is the first strictly positive time of $S^*$.
\end{corollary}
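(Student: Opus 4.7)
I would deduce Corollary~\ref{corol1} from Theorem~\ref{theo1} by a Brownian scaling argument. Set $n := m^2$ and introduce the rescaled process $\hat B(s) := B(m^2 s)/m$ for $s \geq 0$. By Brownian scaling, $\hat B$ is itself a driftless Brownian motion with variance $\sigma^2$, so $\hat B \stackrel{d}{=} B$ as a process for every $m$. The scaling yields the pathwise identities $\tau_m^B = m^2\,\tau_1^{\hat B}$, where $\tau_1^{\hat B} := \inf\{s\geq 0 : \hat B(s)\geq 1\}$, and $S_k = B(k) = m\,\hat B(k/m^2)$, so that $\tau_m^S/m^2 = \tau_1^{\hat B^n}$ is the first crossing time of the Euler-discretized path $\hat B^n$ (on the mesh $\{0,1/n,2/n,\ldots\}$) above the constant barrier $b(\cdot)\equiv 1$.

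I would then apply Theorem~\ref{theo1} to $\hat B$ with $b \equiv 1$, which trivially satisfies (H$_b$). As $n = m^2 \to \infty$, it yields joint convergence in distribution of
\begin{equation*}
\bigl(n(\tau_1^{\hat B^n} - \tau_1^{\hat B}),\ \sqrt{n}\,(\hat B^n(\tau_1^{\hat B^n}) - 1),\ \lceil n\,\tau_1^{\hat B}\rceil - n\,\tau_1^{\hat B}\bigr)
\end{equation*}
to $(U + K,\ \sigma W(U+K),\ U)$ with $K := \min\{k\geq 0 : W(U+k) > 0\}$. Undoing the scaling, these three coordinates equal $\tau_m^S - \tau_m^B$, $S_{\tau_m^S} - m = S_{\tau_m^S} - B(\tau_m^B)$ (using $B(\tau_m^B) = m$ by continuity of $B$), and $\lceil \tau_m^B \rceil - \tau_m^B$, respectively; projecting onto the first two gives convergence of~\eqref{theo:cor 1 exp} to $(U+K,\sigma W(U+K))$. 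To match this with~\eqref{theo:cor 1 lim}, I would observe that $(U,W(U)) \stackrel{d}{=} (U,\sqrt U\,X^*_0)$ (since $W(U)\mid U \sim N(0,U)$), while the integer increments $W(U+k+1)-W(U+k)$ for $k \geq 0$ are iid standard normal and independent of $(U,W(U))$, matching $(X^*_{k+1})_{k\geq 0}$; hence $(W(U+k))_{k\geq 0} \stackrel{d}{=} (S^*_k)_{k\geq 0}$, which yields $(K,W(U+K)) \stackrel{d}{=} (\tau_+,S^*_{\tau_+})$.

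The delicate point is the scaling reduction itself, since Theorem~\ref{theo1} concerns a single fixed Brownian motion whereas $\hat B$ depends nominally on $m$ through the scaling factor. In the driftless regime (consistent with the coupling $S_n = B(n)$, which forces $\nu = 0$), the identity $\hat B \stackrel{d}{=} B$ for every $m$ means the rescaled triplet has the same joint law as the analogous triplet for a fixed driftless Brownian motion at mesh $1/n$, to which Theorem~\ref{theo1} applies verbatim as $n \to \infty$. For a genuinely drifted walk ($\nu > 0$) the naive scaling produces a rescaled drift $m\nu$ that diverges, so one would instead have to argue directly via the strong Markov property at $\tau_m^B$ combined with an equidistribution estimate for $\lceil \tau_m^B \rceil - \tau_m^B$ modulo~$1$.
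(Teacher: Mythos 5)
Your argument is correct and follows essentially the paper's own route: the paper likewise deduces the corollary from Theorem~\ref{theo1} by Brownian scaling with constant barrier $b\equiv 1$ and $n=m^2$, and identifies the limit with $\left(U+\tau_+,\ \sigma S^*_{\tau_+}\right)$ via exactly your observation that $(W(U+k))_{k\ge 0}$ has the law of $(S^*_k)_{k\ge 0}$. Your closing caveat about the drift is apt: the coupling as stated in the corollary (zero-drift $B$ with $S_n=B(n)$) indeed forces $\nu=0$, and the paper's one-line proof (``take $\mu=\nu$, $b=1$, $n=m^2$'') glosses over the fact that for $\nu>0$ the rescaling produces drift $m\nu$, which falls outside the fixed-drift hypothesis of Theorem~\ref{theo1} and would change the limit.
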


The proof is a straightforward corollary of Theorem~\ref{theo1}.
Indeed, using Brownian scaling the error pair~\eqref{theo:cor 1 exp} is equal in distribution to the first and second component of~\eqref{ch3:theo:hit b} for $\mu=\nu$, barrier $b=1$ and $n = m^2$.

We remark that since $B \left( \tau_m^B \right)=m$ the latter result covers in particular the overshoot of the Gaussian walk $S$ above the barrier $m$, i.e.~$S_{\tau_m^S} - m$, as $m \to \infty$.
Such limit overshoot was already known to converge, and its limit distribution was characterized in terms of the {\it ladder heights} distribution of the random walk, see e.g.~\cite{siegmund1985sequential}.
Thus, our result connects the ladder heights of a Gaussian walk with the distribution of the pair~\eqref{theo:cor 1 lim}.

We also remark that the limit random variables~\eqref{theo:cor 1 lim} and~\eqref{ch3:theo:lim hit b} are related in that the triplet
$$\left( U+\tau_+ , \quad \sigma S^*_{\tau_+} , \quad U \right)$$
is equal in distribution to the limiting random variable~\eqref{ch3:theo:lim hit b}.

\subsection{Corollary 2: running minimum asymptotics}\label{ch3:sec:gaussian 2}

Let $S = \left( S_n : n \geq 0 \right)$ be a Gaussian walk starting from zero, having drift $\EE S_1 = 0$ and variance $\EE (S_1)^2 =\sigma^2$.
Assume that the probability space is sufficiently rich so that there exists a Brownian motion $B = \left( B(t) : t \geq 0 \right)$ starting from zero, with zero drift and variance $\sigma^2$, and such that almost surely $S_n = B(n)$ for all $n$.

We want to analyze ``how well'' a finite-horizon extreme event of the Brownian motion $B$ approximates the analogue event of the Gaussian walk $S$.
More specifically, we want to study the random variables $\argmin_{k=0,\ldots, n} S_k$ and $\min_{k=0,\ldots, n} S_k$ by comparing them to their Brownian counterparts $\argmin_{t \in [0,n]} B(t)$ and $\min_{t \in [0,n]} B(t)$, respectively, where in both cases we refer to $\argmin$ as the almost sure unique values at which the minimums are attained.
The following result characterises the convergence of the latter two errors.

\begin{corollary}\label{corol2}
As $n \to \infty$, the pair
\begin{eqnarray}\label{ch3:theo:cor 2 exp}
& \left(
\argmin_{k=0,\ldots, n} S_k - \argmin_{t \in [0,n]} B(t) , \quad
\min_{k=0,\ldots, n} S_k - \min_{t \in [0,n]} B(t)
\right)\end{eqnarray}
converges in distribution to the pair
\begin{eqnarray}\label{ch3:theo:cor 2 lim}
& \left(
U+\argmin_{k \in \Z} R(U+k) , \quad
\sigma \min_{k \in \Z} R(U+k)
\right),
\end{eqnarray}
where $R = \left( R(t) : t \in \R \right)$ is a two-sided Bessel(3) process and $U$ a uniformly distributed random variable on $(0,1)$ that is independent of $R$.
Here we have abused notation and in all cases the $\argmin$ operations corresponds to the almost surely {\it unique} value at which the minimum is attained in each case.
\end{corollary}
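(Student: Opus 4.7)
The plan is to derive Corollary~\ref{corol2} as a direct consequence of Theorem~\ref{theo2} via a Brownian scaling argument that rescales time by a factor of $n$. Fix $n \geq 1$ and define $\tilde B(t) := B(nt)/\sqrt n$ for $t \geq 0$. By Brownian scaling, $\tilde B$ is again a zero-drift Brownian motion with variance $\sigma^2$, and the coupling $S_k = B(k)$ gives $S_k = \sqrt n\,\tilde B(k/n)$ almost surely for every nonnegative integer $k$.

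Let $\tilde B^n(t) := \tilde B(\lfloor nt \rfloor / n)$ be the Euler discretization of $\tilde B$ on the mesh $\{0, 1/n, 2/n, \ldots\}$ in the sense of~\eqref{ch3:def:Bn}, and let $\tilde T$, $\tilde T^n$ denote the random times $T_{\min,1}$, $T_{\min,1}^n$ of~\eqref{ch3:def:T min fin}--\eqref{ch3:def:T min fin n} when the underlying Brownian motion is taken to be $\tilde B$. Since $\tilde B^n$ is piecewise constant on each interval $[k/n, (k+1)/n)$, the minimum over $[0,1]$ of $\tilde B^n$ coincides with $\min_{k = 0, \ldots, n} \tilde B(k/n)$, and almost surely $\tilde T^n = k^*/n$ for the unique $k^* \in \{0, \ldots, n\}$ minimising $\tilde B(k/n)$. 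Combining these observations with $S_k = \sqrt n\,\tilde B(k/n)$ yields
\[
\argmin_{k = 0, \ldots, n} S_k = n\,\tilde T^n, \qquad \argmin_{t \in [0,n]} B(t) = n\,\tilde T,
\]
\[
\min_{k = 0, \ldots, n} S_k = \sqrt n\,\tilde B^n(\tilde T^n), \qquad \min_{t \in [0,n]} B(t) = \sqrt n\,\tilde B(\tilde T).
\]

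Subtracting, the pair in~\eqref{ch3:theo:cor 2 exp} equals $\bigl( n(\tilde T^n - \tilde T),\; \sqrt n\,(\tilde B^n(\tilde T^n) - \tilde B(\tilde T)) \bigr)$, which is precisely the first two components of the triplet~\eqref{ch3:theo:min a} applied to $\tilde B$ with $a = 1$. Theorem~\ref{theo2} then yields joint convergence in distribution to the first two components of~\eqref{ch3:theo:lim min a}, i.e.\ to the pair~\eqref{ch3:theo:cor 2 lim}.

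The argument is essentially a change of variables, so I do not expect a substantial obstacle; the main care required is in (i) verifying that the argmin of the piecewise-constant process $\tilde B^n$ on $[0,1]$ coincides almost surely with $k^*/n$ for the unique $k^* \in \{0,\ldots,n\}$ minimising $\tilde B(k/n)$, which is immediate from the definition~\eqref{ch3:def:Bn}, and (ii) confirming that minimum values and their locations scale by $\sqrt n$ and $n$ respectively, which is a direct consequence of $\tilde B(k/n) = S_k/\sqrt n$.
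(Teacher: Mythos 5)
Your argument is exactly the paper's proof: the paper simply remarks that, by Brownian scaling, the pair \eqref{ch3:theo:cor 2 exp} is equal in distribution to the first two components of the triplet \eqref{ch3:theo:min a} with $a=1$, and then invokes Theorem~\ref{theo2}. Your write-up correctly fills in the change of variables $\tilde B(t)=B(nt)/\sqrt n$ and the identification of the discrete argmin with $n\tilde T^n$, so it is a valid, fully detailed version of the same proof.
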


The proof is straightforward when noting that, using Brownian scaling, the pair \eqref{ch3:theo:cor 2 exp} is equal in distribution to the first two terms of the triplet in \eqref{ch3:theo:min a} of Theorem~\ref{theo2}.

\subsection{Corollary 3: minimum as drift vanishes}\label{ch3:sec:gaussian 3}

Let $S = \left( S_n : n \geq 0 \right)$ be a Gaussian walk starting from zero, with strictly positive drift $\EE S_1 = \nu > 0$ and variance $\EE (S_1-\nu)^2 =\sigma^2$.
Assume that the probability space is sufficiently rich so that there exists a Brownian motion $B = \left( B(t) : t \geq 0 \right)$ starting from zero, having drift $\nu>0$ and variance $\sigma^2$, and such that $S_n = B(n)$ for all $n \in \Z_+$.

We want to analyze ``how well'' an infinite-horizon extreme event of the Brownian motion $B$ approximates the analogue event of the Gaussian walk $S$.
For that we study the values $\argmin_{k \in \Z_+} S_k$ and $\min_{k \in \Z_+} S_k$ as the drift $\nu$ decreases to zero, and we do it by comparing them to their Brownian counterparts $\argmin_{t \in \R_+} B(t)$ and $\min_{t \in \R_+} B(t)$, respectively, where in both cases we refer to $\argmin$ as the almost sure unique values at which the minimums are attained.

\begin{corollary}\label{corol3}
As the drift decreases to zero, $\nu \searrow 0$, the pair
\begin{eqnarray}\label{ch3:theo:cor 3 exp}
& \left(
\argmin_{k \in \Z_+} S_k - \argmin_{t \in \R_+} B(t), \quad
\min_{k \in \Z_+} S_k - \min_{t \in \R_+} B(t)
\right)
\end{eqnarray}
converges in distribution to the pair in \eqref{ch3:theo:cor 2 lim}.
Here we have abused notation and in all cases the $\argmin$ operations corresponds to the almost surely {\it unique} value at which the minimum is attained in each case.
\end{corollary}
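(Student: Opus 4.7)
The plan is to mirror the strategy used for Corollary~\ref{corol2}, reducing Corollary~\ref{corol3} directly to Theorem~\ref{theo3} by a Brownian scaling argument. The guiding intuition is that a Brownian motion with small drift $\nu$ observed on the unit lattice is, in law, the same object as a Brownian motion with unit drift observed on a very fine mesh; thus letting $\nu\searrow 0$ in the drift-vanishing problem corresponds exactly to letting the mesh shrink in Theorem~\ref{theo3}.

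Concretely, I would set $n := 1/\nu^{2}$ and define the rescaled process $\hat B(t) := \nu B(t/\nu^{2})$. By the scaling property of Brownian motion, $\hat B$ is a Brownian motion starting from zero with drift $1$ and variance $\sigma^{2}$; in particular its global minimum is almost surely attained at a unique finite time, and the same holds for the Gaussian walk $S$ since $\nu>0$ and its steps are continuous. At integer times one has the pathwise identity $\nu S_k = \nu B(k) = \hat B(\nu^{2} k) = \hat B(k/n) = \hat B^{n}(k/n)$, so the scaled Gaussian walk $(\nu S_k)_{k\geq 0}$ coincides with the grid-point values of the Euler discretization $\hat B^{n}$ of $\hat B$. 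Reading off arg-minima and minima, and using $1/\nu = \sqrt{n}$, the pair in \eqref{ch3:theo:cor 3 exp} satisfies
$$
\Bigl(\argmin_{k\in\Z_+}S_k - \argmin_{t\in\R_+}B(t),\ \min_{k\in\Z_+}S_k - \min_{t\in\R_+}B(t)\Bigr) \stackrel{d}{=} \Bigl(n\bigl(T^{n}_{\min,\infty}(\hat B) - T_{\min,\infty}(\hat B)\bigr),\ \sqrt{n}\bigl(\hat B^{n}(T^{n}_{\min,\infty}(\hat B)) - \hat B(T_{\min,\infty}(\hat B))\bigr)\Bigr),
$$
which is exactly the first two entries of the triplet in~\eqref{ch3:theo:min infty} applied to $\hat B$. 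Since $\nu\searrow 0$ if and only if $n\to\infty$, Theorem~\ref{theo3} delivers convergence to $\bigl(U+\argmin_{k\in\Z}R(U+k),\ \sigma\min_{k\in\Z}R(U+k)\bigr)$, which is the claimed limit \eqref{ch3:theo:cor 2 lim}.

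The only mild technical point, and what I expect to be the main obstacle, is that $n = 1/\nu^{2}$ need not be an integer, whereas Theorem~\ref{theo3} is phrased along integer $n$. This is not a genuine difficulty: the proof of Theorem~\ref{theo3} given in Section~\ref{ch3:sec:proof} is driven by the mesh width $1/n\downarrow 0$ and never uses integrality, so it applies verbatim with real-valued $n\to\infty$. If one prefers to avoid even that remark, one can restrict to the subsequence $\nu \in \{1/\sqrt{m}:m\in\N\}$ and observe that the limiting pair on the right-hand side of Theorem~\ref{theo3} does not depend on $n$, so the full limit as $\nu\searrow 0$ agrees with the subsequential one.
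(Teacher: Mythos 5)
Your proposal is correct and follows essentially the same route as the paper: the paper's proof is exactly the observation that, by Brownian scaling with $n=\nu^{-2}$ and drift $\mu=1$, the pair \eqref{ch3:theo:cor 3 exp} is equal in distribution to the first two entries of the triplet in \eqref{ch3:theo:min infty}, after which Theorem~\ref{theo3} applies. Your additional remark about the non-integrality of $n=\nu^{-2}$ is a reasonable point of care that the paper glosses over, and your resolution of it is fine.
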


The proof is direct from Theorem~\ref{theo3}, since using Brownian scaling we obtain that \eqref{ch3:theo:cor 3 exp} is equal in distribution to the first two terms of the triplet in \eqref{ch3:theo:min infty} with drift $\mu=1$ and $n=\nu^{-2}$.

\section{Connecting literature with the constant $-\zeta(1/2) / \sqrt{2 \pi}$}\label{sec:zeta}

In this section we show how our results provide a framework unifying several works in the literature where the constant
\begin{eqnarray}\label{def:beta}
\beta &:=& - \frac{ \zeta(1/2) }{ \sqrt{2 \pi} }
\end{eqnarray}
has appeared, where $\zeta$ is the Riemann zeta function.
Indeed, this constant has appeared in a variety of works spanning several decades and treating problems in a wide arrange of areas.
In short, \cite{chernoff1965sequential} and~\cite{siegmund1985sequential} study sequential statistical tests; \cite{siegmund1979corrected}, \cite{siegmund1982brownian}, \cite{coffman1998maximum} and~\cite{comtet2005precise} study approximating with Brownian motion certain events of random walks; \cite{asmussen1995discretization} and~\cite{calvin1995average} study discretized Brownian motion; \cite{chang1997ladder}, \cite{janssen2007cumulants} and~\cite{janssen2007lerch} study Gaussian walks; and \cite{broadie1997continuity}, \cite{broadie1999connecting} and~\cite{dia2011continuity} study option pricing in financial engineering.
As pointed out by~\cite{janssen2007lerch}, the connection between these works having the constant $\beta$ as common denominator has been unclear.

We claim that our collection of results, Theorems~\ref{theo1}, \ref{theo2} and~\ref{theo3} and Corollaries~\ref{corol1}, \ref{corol2} and~\ref{corol3}, connect these works in the literature.
The key fact buttressing our argument is the following result.
It shows that two of the limiting random variables we obtain in our results have the same mean equal to $\beta$, despite of coming from different but related problems.
\begin{proposition}\label{prop:beta}
Let $R$ be a standard two-sided Bessel(3) process, $W$ a standard Brownian motion, and $U$ a random variable uniformly distributed on the interval $(0, \, 1)$ which is independent of $R$ and $W$.
It holds that
\begin{eqnarray}\label{beta eq 1}
& \EE \left[ W \left( U+\min\{k \geq 0 : W(U+k)>0\} \right) \right] = \EE \left[ \min_{k \in \Z} R (U+k) \right] = -\frac{\zeta(1/2)}{\sqrt{2 \pi}}.
\end{eqnarray}
\end{proposition}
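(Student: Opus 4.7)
The plan is to identify each of the two expectations in \eqref{beta eq 1} with $\beta=-\zeta(1/2)/\sqrt{2\pi}$ by reinterpreting each limiting random variable as the weak limit of a pre-limit sequence whose asymptotic mean is already known in the literature to equal $\beta$. The key extra ingredient is uniform integrability, which upgrades the weak convergence of Corollaries~\ref{corol1} and~\ref{corol2} to convergence of means.

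For the second equality, I would specialize Corollary~\ref{corol2} to $\sigma=1$, so that for a standard Gaussian walk $S$ embedded in a driftless standard Brownian motion $B$,
\begin{equation*}
D_n \; := \; \min_{0\le k\le n} S_k - \min_{t\in[0,n]} B(t) \;\xrightarrow{d}\; \min_{k\in\Z} R(U+k) \quad\text{as } n\to\infty.
\end{equation*}
Under uniform integrability of $\{D_n\}$, taking expectations yields $\EE\bigl[\min_{k\in\Z}R(U+k)\bigr] = \lim_n\bigl(\EE[\min_{0\le k\le n} S_k]-\EE[\min_{t\in[0,n]} B(t)]\bigr)$, and this limit is the classical Euler-discretization correction for the extremum of Brownian motion over a compact interval, identified with $-\zeta(1/2)/\sqrt{2\pi}$ in \cite{asmussen1995discretization}, \cite{calvin1995average} and \cite{broadie1997continuity}.

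For the first equality, I would specialize Corollary~\ref{corol1} to $\sigma=1$ with an arbitrary fixed drift $\nu>0$, giving
\begin{equation*}
O_m \; := \; S_{\tau_m^S}-m \; = \; S_{\tau_m^S}-B(\tau_m^B) \;\xrightarrow{d}\; W\bigl(U+\min\{k\ge 0:W(U+k)>0\}\bigr) \quad \text{as } m\to\infty.
\end{equation*}
A Lorden-type renewal inequality combined with the finiteness of all Gaussian moments gives $\sup_m \EE[O_m^2]<\infty$, hence uniform integrability. Consequently $\EE[W(U+\tau_+)] = \lim_m \EE[O_m]$, the asymptotic mean overshoot of a Gaussian random walk above a high barrier, which has been identified with $-\zeta(1/2)/\sqrt{2\pi}$ in \cite{chernoff1965sequential}, \cite{siegmund1979corrected}, and explicitly via the Lerch transcendent in \cite{janssen2007lerch}.

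The main technical obstacle is the uniform integrability of $\{D_n\}$, which I would establish via a Williams-type path decomposition of $B-B(t^*)$ around its argmin $t^*$ on $[0,n]$: its two halves are (conditionally) independent Bessel(3) processes $R^L, R^R$ starting at zero, and since $\min_k S_k \le B(k)$ for $k=\lfloor t^*\rfloor, \lceil t^*\rceil$, one obtains the pathwise bound
\begin{equation*}
0 \;\le\; D_n \;\le\; \min\bigl(R^L(t^*-\lfloor t^*\rfloor),\ R^R(\lceil t^*\rceil-t^*)\bigr),
\end{equation*}
with both Bessel arguments in $[0,1]$. The right-hand side is stochastically dominated by an $L^p$-bounded random variable uniformly in $n$, giving the required uniform integrability. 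Once both uniform integrability statements are in hand, the two identities in \eqref{beta eq 1} follow by invocation of the classical constants above.
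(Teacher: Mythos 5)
Your overall strategy---identify the mean of each limit variable with the known constant by passing to the pre-limit quantities of Corollaries~\ref{corol1} and~\ref{corol2} and upgrading weak convergence to convergence of means via uniform integrability---is in the same spirit as the paper, whose proof of Proposition~\ref{prop:beta} consists of citing \cite{asmussen1995discretization} for the second equality and \cite{broadie1997continuity} for the first. Your treatment of the second equality is essentially sound; note only that \cite[Theorem~2]{asmussen1995discretization} already contains the convergence of means you need, so your Bessel-domination argument is only required if you insist on being self-contained, and in that case the two halves of $B-B(t^*)$ around the argmin on a \emph{compact} interval are Brownian meanders (Bessel(3) bridges given the endpoint values), not free Bessel(3) processes started at zero, although the stochastic domination you want does survive this correction.

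The genuine gap is in the first equality. You fix a drift $\nu>0$ and assert that $\lim_m \EE[O_m]=-\zeta(1/2)/\sqrt{2\pi}$. For a Gaussian walk with \emph{fixed} positive drift, renewal theory gives $\lim_m \EE[O_m]=\EE[S_{\tau_+}^2]/(2\,\EE[S_{\tau_+}])$, where $\tau_+$ is the first strict ascending ladder epoch of the \emph{drifted} walk; this quantity depends on $\nu$ and equals $\beta$ only in the limit $\nu\downarrow 0$---that is precisely the content of the corrected-diffusion-approximation expansions of \cite{siegmund1979corrected} and \cite{chang1997ladder}. Correspondingly, for fixed $\nu>0$ the overshoot converges to the $\nu$-dependent stationary overshoot law of the drifted ladder-height process, not to the driftless limit appearing in \eqref{theo:cor 1 lim}, so your chain of identifications cannot close as written. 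The correct specialization is $\nu=0$: for the driftless standard Gaussian walk one has $\EE[S_{\tau_+}]=1/\sqrt{2}$ and $\EE[S_{\tau_+}^2]/(2\,\EE[S_{\tau_+}])=-\zeta(1/2)/\sqrt{2\pi}$, which is exactly \cite[Lemma~4.2]{broadie1997continuity} (see also \cite[Ch.~X]{siegmund1985sequential}). With $\nu=0$ a Lorden-type bound still yields uniform integrability, but it must be applied to the renewal process of strict ascending ladder heights (positive increments with all moments finite), not to the walk itself, since the walk no longer has positive drift. Alternatively, you can dispense with the interchange of limits entirely: the cited references compute the mean of the limiting equilibrium overshoot distribution directly, which is all Proposition~\ref{prop:beta} requires.
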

The second equality is proven in~\cite{asmussen1995discretization}; and the first one by \cite{broadie1997continuity} who compute the mean of the limiting overshoot of a Gaussian walk, whose limit distribution is given by Corollary~\ref{corol1}.

We now argue that each of the papers in which the constant $\beta$ has appeared can be viewed as an extension, particular case or application of one of results our results.
To support this argument, we now give an overview of this literature and organise it based on which of our results best explains why the constant appears in each paper.
We focus on briefly describing the problem the paper treats and how the constant $\beta$ appears.
Our intended objective is for this summary to draw the connection underlying these works in the literature.

\subsection{Papers related to our Theorem~\ref{theo1}}

Recall that Theorem~\ref{theo1} studies the discretizaton error of Brownian motion when simulating barrier (function) hitting events.

	\paragraph{\cite{chernoff1965sequential}.}
	This paper studies the problem of deciding the sign of a stream of iid normal random variables.
	It argues that this discrete problem reduces to one of monitoring if a Brownian motion sequentially observed at times at times $\{0, \delta, 2\delta, \ldots\}$, for $\delta>0$, ever crosses a given deterministic barrier function, say~$\tilde x_\delta (\cdot)$.
	Chernoff points out that additionally the original problem of deciding the sign of iid normal observations can be approximated by a continuous one of sequentially testing for the sign of a Brownian motion; and which moreover reduces to testing if a Brownian motion ever reaches another given deterministic barrier function, say $\tilde x (\cdot)$.
	The main result of \cite{chernoff1965sequential} is that both barriers $\tilde x (\cdot)$ and $\tilde x_\delta (\cdot)$ are related in the following way
	$$\tilde x (\cdot) = \tilde x_\delta (\cdot) + \beta \sqrt \delta + o(\sqrt \delta), \qquad \text{as }\delta \to 0,$$
	where the constant $\beta$ is as defined in~\eqref{def:beta}.
	
	This result is related to our Theorem~\ref{theo1} because in it we compare the barrier-hitting time of a Brownian motion to the barrier-hitting time for its discretized version on a time mesh of the type $\{0, \delta, 2\delta, \ldots\}$ with $\delta \to 0$.
	Moreover, note that by Proposition~\ref{prop:beta} the constant $\beta$ is the mean of one of the limitting random variables in Theorem~\ref{theo1}.
	With this we have that both results, i.e.~our theorem and Chernoff's result, establish that the distance between two barrier-hitting positions goes to zero at rate $\beta\sqrt\delta+o(\sqrt \delta)$ as $\delta \to 0$.

	\paragraph{\cite{broadie1997continuity}.}
	Broadie~et~al.~study the pricing of a barrier option when the stock price can only be monitored on a pre-specified regular time mesh.
	The pricing of the barrier involves the hitting time of a Brownian motion to a fixed barrier; however the precise hitting time is unknown as the Brownian motion is ``observed'' only on a regular mesh.
	This result is related to our Theorem~\ref{theo1} since in it we study the error of simulating barrier hits of Brownian motion when one can only observe the Brownian path on a regular mesh.
	We also remark that in~\cite{broadie1997continuity} the constant $\beta$ is derived from the mean overshoot of a Gaussian walk above a barrier that goes to infinity, which is also related to our Corollary~\ref{corol1}, as will be seen in Section~\ref{subsec:corol1}.

	\paragraph{\cite{dia2011continuity}.}
	Dia~et~al.~extend the work of \cite{broadie1997continuity} to price barrier options which additionally have positive jumps.
	We remark though that their main result is not proven by extending \cite{broadie1997continuity} but by using the results of \cite{asmussen1995discretization}, which are related to Theorem~\ref{theo2} as will be seen in Section~\ref{subsec:theo2}.

\subsection{Papers related to our Corollary~\ref{corol1}}\label{subsec:corol1}

\paragraph{\cite{siegmund1979corrected}.}
This paper studies approximating certain rare events of light-tailed random walks by an analogue event for Brownian motion.
In particular, \cite[Lemma~3]{siegmund1979corrected} studies the overshoot of a random walk above a barrier when the barrier goes to infinity.
This related to our Corollary~\ref{corol1}, since the second component of the pair~\eqref{theo:cor 1 exp} corresponds to the overshoot of the Gaussian walk above a barrier value $m$; and by Proposition~\ref{prop:beta} the mean of the limitting overshoot in the Gaussian walk case is equal to $\beta$.

\paragraph{\cite{siegmund1982brownian}.}
This paper extends Lemma~3 of~\cite{siegmund1979corrected}, which is related to our Corollary~\ref{corol1}.

\paragraph{\cite{siegmund1985sequential}.}
In Chapter~VIII, ``Random Walk and Renewal Theory'', Siegmund studies the overshoot of random walks from a renewal theory perspective.
He shows that there is an asymptotic ``equilibium'' limit overshoot distribution, and derive results in the line of~\cite[Lemma~3]{siegmund1979corrected} and~\cite[Theorem~1]{siegmund1982brownian}.

Additionally, Chapter~X, ``Corrected Brownian Approximations'', covers diffusion approximations results in the line of~\cite{siegmund1979corrected} and~\cite{siegmund1982brownian}.
Specifically, they derive diffusion approximation results by using the asymptotic overshoot results of Chapter~VIII.

\paragraph{\cite{chang1997ladder}.}
This paper considers the mean of the ladder height of a Gaussian walk, and obtains a series expansion of this quantity in terms of the drift term.
The constant $\beta$ appears as the coefficient of a term in the series expansion.
This result is tangentially related to our Corollary~\ref{corol1} in that our result describes the (asymptotic) overshoot of Gaussian walks, and these values can in turn be studied by reducing the walk to a sum of ladder heights of the random walk.
Moreover, by Proposition~\ref{prop:beta} we know that $\beta$ is the mean of the limit distribution of the overshoot in the Gaussian case.

\subsection{Papers related to our Theorem~\ref{theo2}}\label{subsec:theo2}

\paragraph{\cite{asmussen1995discretization}.}
This paper studies the convergence of the Euler discretization of one-dimensional \emph{reflected Brownian motion}.
Their main result is the weak convergence of the second component of the pair~\eqref{ch3:theo:min a} in our Theorem~\ref{theo2}.
The constant $\beta$ appears in this paper in \cite[Theorem 2]{asmussen1995discretization}, which in the context of our Theorem~\ref{theo2} establishes that
	$$\EE \left[ \sqrt n \left( \min_{k=0, \ldots,n}B(k/n) - \min_{0 \leq t \leq 1}B(t) \right) \right] \rightarrow \EE \left[ \min_{k \in \Z} R(k+U) \right] = -\frac{\zeta(1/2)}{\sqrt{2 \pi}}$$
	as $n \to \infty$.

\paragraph{\cite{calvin1995average}.}
This paper considers the minimization of a real function over a compact real interval and studies the performance of using discretization schemes to carry out the minimization.
To do this, Calvin argues that a reasonable benchmark is to assume that the objective function is a Brownian path.
The constant $\beta$ appears essentially because they show that
$$\EE \left[ \sqrt n \left( \min_{t \in \{ 0, 1/n, \ldots, 1\}} B(t) - \min_{t \in [0,1]} B(t) \right) \right] \rightarrow -\frac{\zeta(1/2)}{\sqrt{2 \pi}}$$
as $n \to \infty$.

\paragraph{\cite{broadie1999connecting}.}
	In this paper Broadie et~al.~study the pricing of lookback options when the stock price can only be monitored on a pre-specified regular time mesh.
	The pricing of the barrier involves the event of a Brownian motion going over a fixed barrier on a given finite time horizon.
	This event is put in terms of the maximum of the Brownian motion being greater than the barrier; however the precise maximum of the Brownian motion is unknown, as the Brownian motion is ``observed'' only at a regular mesh.
	More specifically, Broadie~et~al.~are interested in expressing the event $\{B^n(t)>y, \ \tau_b^n \leq t \}$ in terms of $B(t)$ and $\tau_b$; here, $B$ is a Brownian motion, $\tau_b$ is its hitting time to the barrier $b$, and $B^n$ and $\tau_b^n$ are the analogous discretized versions.
Their key result is that they show the approximation
\begin{eqnarray}\label{eq:broadie}
\PP \left( B^n(t)>y, \ \tau_b^n \leq t \right) = \PP \left( B(t)>y , \ \tau_{b + \sigma \beta/\sqrt n} \leq t \right) + o(1/\sqrt n),
\end{eqnarray}
for $t$ in the discretization mesh, where $\beta$ is as defined in~\eqref{def:beta}.
We remark that in view of our Theorem~\ref{theo2} and Proposition~\ref{prop:beta}, the asymptotic approximation of~\eqref{eq:broadie} can be \emph{heuristically} understood as, for $t$ in the discretization mesh,
\begin{eqnarray*}
\lefteqn{ \left\{ B^n(t)>y, \, \tau_b^n \leq t \right\} } \\
&& = \left\{B(t)>y, \, \max_{s \in [0,t]} B(s) \geq b + \frac{1}{\sqrt n} \sqrt n \left( \max_{s \in [0,t]} B(s)-\max_{s \in [0,t]} B^n(s) \right) \right\} \\
&&\approx \left\{B(t)>y, \, \max_{s \in [0,t]} B(s) \geq b + \frac{1}{\sqrt n} \EE \left[ \sqrt n \left( \max_{s \in [0,t]} B(s)-\max_{s \in [0,t]} B^n(s) \right) \right]\right\} \\
&&\approx \left\{B(t)>y, \, \max_{s \in [0,t]} B(s) \geq b + \frac{1}{\sqrt n} \sigma \beta \right\} = \left\{B(t)>y, \, \tau_{b + \sigma \beta/\sqrt n} \leq t \right\}.
\end{eqnarray*}
	
\subsection{Papers related to our Corollary~\ref{corol2}}

\paragraph{\cite{coffman1998maximum} and \cite{comtet2005precise}.}
	These papers consider random walks with symmetric step sizes and study their expected maximum up to time $n$.
	They do this by comparing it to the expected maximum of a Brownian motion up to time $n$, and then derive an asymptotic expression when $n \to \infty$.
	The constant $\beta$ appears when specializing to the particular case of Gaussian walks.
	Therefore these result are related to our Corollary~\ref{corol2} since in it we compare finite-horizon extremes of Gaussian walks to the analogue of Brownian motion, and by Proposition~\ref{prop:beta} we know that one of the limitting distributins has mean equal to $\beta$.

\subsection{Papers related to our Corollary~\ref{corol3}}

\paragraph{\cite{siegmund1979corrected}.}
This paper considers light-tailed random walks with negative drift and expresses their mean infinite-horizon maximum as an asymptotic expansion in terms of the drift of the random walk.
	The leading term is the mean of a global maximum of a Brownian motion with same drift as the random walk, minus the constant $\beta$, plus other terms that go to zero as the drift decreases.
	This result is related to our Corollary~\ref{corol3} since in it we compare the global extremes of Gaussian walks to the analogue for Brownian motion; and from Proposition~\ref{prop:beta}, the constant $\beta$ corresponds to the mean of one of the limitting random variables.

\paragraph{\cite{janssen2007cumulants,janssen2007lerch}.}
These papers study essentially the moments of the global minimum of a Gaussian walk with positive drift.
They compare these moments to the corresponding ones of the global minimum of a Brownian motion with the same positive drift.
They give a series expansion, in terms of the drift, of the difference of these two quantities and obtain that one of the leading terms is the constant $\beta$.
These results are related to our Corollary~\ref{corol3}, since there we compare the global extremes of Gaussian walks and Brownian motions, when both have strictly positive drift that decreases to zero.
Additionally, by Proposition~\ref{prop:beta} we know that $\beta$ is the first moment of one of the limitting random variables in Corollary~\ref{corol3}.


\section{Proof of Theorems~\ref{theo1}, \ref{theo2} and \ref{theo3}}\label{ch3:sec:proof}

In this section we show the proof of Theorems~\ref{theo1}, \ref{theo2} and~\ref{theo3}.
The main idea is to write the discretization errors as mappings of the original Brownian motion and then apply the continuous mapping theorem, or an argument of that type, to show weak convergence of the errors.
We do this by first, in Section~\ref{ch3:subsec:errors as maps}, writing the discretization errors as mappings of the original Brownian motion ``zoomed-in'' about the random time of interest.
Then, in Section~\ref{ch3:subsec:zoomed-in converge K} we show that the zoomed-in processes converge in distribution.
Lastly, in Section~\ref{ch3:subsec:proof} we tie all things together and prove Theorems~\ref{theo1}, \ref{theo2} and~\ref{theo3}.

For the sake of clarity of exposition, we briefly recall the notation used.
For a Brownian motion path $B$ with drift $\mu$ and variance $\sigma^2$ we consider its Euler discretization $B^n$ on the mesh $\{0, \ 1/n, \ 2/n, \ \ldots \}$ as $B^n(t) := B(\lfloor nt \rfloor / n)$ for all $t \geq 0$.
Also, recall that the times $\tau_b$, $T_{\min,a}$ and $T_{\min,\infty}$ are defined as follows:
\begin{eqnarray*}
\tau_b &:=& \inf \left\{ t \in (0, \infty) : B(t) \geq b(t) \right\}, \\
T_{\min,a} &:=& \inf \left\{ t \in [0,a] : B(t) = \inf_{u \in [0,a]} B(u) \right\}, \\
T_{\text{min},\infty} &:=& \inf \left\{ t \in [0,\infty) : B(t) = \inf_{u \in [0,\infty)} B(u) \right\};
\end{eqnarray*}
and also that their discretized counterparts $\tau_b^n$, $T_{\min,a}^n$ and $T_{\text{min},\infty}^n$ are defined analogously by replacing $B$ by $B^n$ in the previous definitions.
Recall too that by Assumption (H$_b$) the function $b=(b(t) : t \geq 0)$ is continuous and nondecreasing on $\R_+$, continuously differentiable on $\R_+ \!\! \setminus \!\! \{ 0 \}$, and with $b(0) \geq 0$.
We onwards assume that $B$ is actually a two-sided Brownian motion $B = \left( B(t) : t \in \R \right)$ with $B(0)=0$.

\subsection{Discretization errors as mappings of {zoomed-in} processes}\label{ch3:subsec:errors as maps}

In this section we show that the discretization error expressions in Theorems~\ref{theo1}, \ref{theo2} and \ref{theo3} can be rewritten as mappings of certain centerings and scalings of the original Brownian motion~$B$; we call these the {\it {zoomed-in}} processes.
We remark that these processes are separate entities from the Euler discretization $B^n$ of the Brownian motion $B$.

\begin{definition}\label{ch3:def:zoomed-in}
\begin{enumerate}
	\item Under $\{ \tau_b < \infty \}$, for $\m > 0$ define
	\begin{eqnarray}
	\UU{\m}{\hit,b} &:=& \lceil \m \tau_b \rceil - \m \tau_b,
	\end{eqnarray}
	the {\it {zoomed-in}} process $\ZZ{\m}{\hit,b} = \left( \ZZ{\m}{\hit,b} (s) : s \in \R \right)$ as
	\begin{eqnarray}\label{ch3:def:scaled hit}
	\ZZ{\m}{\hit,b} (s) &:=& \sqrt \m \left( B \left( \tau_b + \frac{s}{\m} \right) - B \left( \tau_b \right) \right) , \qquad s \in \R,
	\end{eqnarray}
	and
	\begin{eqnarray}\label{ch3:def:scaled b}
	\bb{\m}{\hit,B} (s) &:=& \sqrt \m \left( b \left( \tau_b + \frac{s}{\m} \right) - b \left( \tau_b \right) \right), \qquad s \in \R_+.
	\end{eqnarray}
	
	\item For $\m > 0$ define
	\begin{eqnarray}
	\UU{\m}{\min,a} &:=& \lceil \m T_{\min,a} \rceil - \m T_{\min,a}
	\end{eqnarray}
	and the {\it {zoomed-in}} process $\ZZ{\m}{\min,a} = \left( \ZZ{\m}{\min,a} (s) : s \in \R \right)$ as
	\begin{eqnarray}\label{ch3:def:scaled min a}
	\ZZ{\m}{\min,a} (s) &:=& \sqrt \m \left( B \left( T_{\min,a} + \frac{s}{\m} \right) - B \left( T_{\min,a} \right) \right) , \qquad s \in \R.
	\end{eqnarray}
	
	\item Under $\{ T_{\min,\infty} < \infty \}$, for $\m > 0$ define
	\begin{eqnarray}
	\UU{\m}{\min,\infty} &:=& \lceil \m T_{\min,\infty} \rceil - \m T_{\min,\infty}
	\end{eqnarray}
	and the {\it {zoomed-in}} process $\ZZ{\m}{\min,\infty} = \left( \ZZ{\m}{\min,\infty} (s) : s \in \R \right)$ as
	\begin{eqnarray}\label{ch3:def:scaled min infty}
	\ZZ{\m}{\min,\infty} (s) &:=& \sqrt \m \left( B \left( T_{\min,\infty} + \frac{s}{\m} \right) - B \left( T_{\min,\infty} \right) \right) , \qquad s \in \R.
	\end{eqnarray}
\end{enumerate}
\end{definition}

Intuitively, as $\m$ grows, the processes \eqref{ch3:def:scaled hit}, \eqref{ch3:def:scaled min a} and~\eqref{ch3:def:scaled min infty} can be understood as ``zooming-in's'' of the Brownian path respectively about the points $(\tau_b, \ B(\tau_b))$, $(T_{\min,a}, \ B(T_{\min,a}))$ and $(T_{\min,\infty}, \ B(T_{\min,\infty}))$.

We now rewrite the discretization errors in Theorems~\ref{theo1}, \ref{theo2} and \ref{theo3} as mappings of the processes $\ZZ{\m}{\hit,b}$, $\ZZ{\m}{\min,a}$ and $\ZZ{\m}{\min,\infty}$, the times $T_{\min,a}$ and $T_{\min,\infty}$, and the random variables $\UU{\m}{\hit,b}$, $\UU{\m}{\min,a}$ and $\UU{\m}{\min,\infty}$.

\begin{lemma}\label{ch3:lemma:decomp}
\begin{enumerate}
	\item Under $\{ \tau_b < \infty , \ \tau_b^n < \infty \}$, almost surely we have
	\begin{eqnarray}
	\left( \begin{array}{c} n \left( \tau_b^n - \tau_b \right) \\ \sqrt n \left( B^n(\tau_b^n) - B(\tau_b) \right) \end{array} \right) &=&  E_{\hit} \left( \UU{n}{\hit,b} , \ \ZZ{n}{\hit,b}, \ \bb{n}{\hit,B} \right),
	\end{eqnarray}
	where the mapping $E_{\hit} : \R \times \CC(\R) \times \CC(\R_+) \to (\R \cup \{ +\infty \}) \times (\R \cup \{ \partial \})$ is defined as
	\begin{eqnarray}\label{ch3:def:E hit}
	E_{\hit} (u, f, g) &:=&  \left( \begin{array}{r} u+ \min\left\{k  \in \Z_+ : f(u+k) > g(u+k) \right\} \ \, \\ f\left( u + \min\left\{k  \in \Z_+ : f(u+k) > g(u+k) \right\} \right) \end{array}  \right),
	\end{eqnarray}
	where for completeness we use the convention $\min \emptyset := +\infty$ and for $f \in \CC(\R)$ we define $f(+\infty) := \partial$.
	
	\item It holds that
	\begin{eqnarray}
	\left(  \begin{array}{c} n \left( T_{\min,a}^n - T_{\min,a} \right) \\ \sqrt n \left( B^n(T_{\min,a}^n) - B(T_{\min,a}) \right) \end{array}  \right) &=& E_{\min,a}^{(n)} \left( \UU{n}{\min,a} ,\ \ZZ{n}{\min,a} , \ T_{\min,a} \right)
	\end{eqnarray}
	where for all $\m \in \Z_+$ the mapping $E_{\min,a}^{(\m)} : \R \times \CC (\R) \times (0,a) \to \R^2$ is defined as
	\begin{eqnarray}\label{ch3:def:E min a}
	E_{\min,a}^{(\m)} (u, f, t) &:=& \left( \begin{array}{r} u + \arginf_{k \in \Z \cap [-\lceil \m t \rceil, \m a-\lceil \m t \rceil]} f(u+k) \\ \inf_{k \in \Z \cap [-\lceil \m t \rceil, \m a-\lceil \m t \rceil]} f(u+k) \end{array} \right),
	\end{eqnarray}
	where we have abused notation and actually denote $\arginf_{s \in N} g(s) := \inf\{ s \in N : g(s) = \inf_{u \in N} g(u) \}$ for $g \in \CC(\R)$, $N \subseteq \R$ compact and $\inf\emptyset := +\infty$.
	
	\item Under $\{ T_{\min,\infty} < \infty \}$ we have that
	\begin{eqnarray}
	\left( \begin{array}{c} n \left( T_{\min,\infty}^n - T_{\min,\infty} \right) \\ \sqrt n \left( B^n(T_{\min,\infty}^n) - B(T_{\min,\infty}) \right) \end{array} \right) &=& E_{\min,\infty}^{(n)} \left( \UU{n}{\min,\infty}, \ \ZZ{n}{\min,\infty} , \ T_{\min,\infty} \right)
	\end{eqnarray}
	where for all $\m \in \Z_+$ the mapping $E_{\min,\infty}^{(\m)} : \R \times \CC_{\inf} (\R) \times \R \to (\R \cup \{+\infty\}) \times \R$ is defined as
	\begin{eqnarray}\label{ch3:def:E min infty}
	E_{\min,\infty}^{(\m)} (u, f, t) &:=& \left( \begin{array}{r} u+ \arginf_{k \in \Z \cap [-\lceil \m t \rceil, \infty)} f(u+k) \\ \inf_{k \in \Z \cap [-\lceil \m t \rceil, \infty)} f(u+k) \end{array} \right),
	\end{eqnarray}
	where, for completeness, we denote $\CC_{\inf+} (\R) :=\{f \in \CC(\R) : \lim_{t \to \infty} f(t) = \infty\}$ and $\arginf_{s \in N} g(s) := \inf\{t \in N : g(t) = \inf_{s \in N} g(s) \}$ for $g \in \CC_{\inf+} (\R)$ and $N \subset \R$ closed, with the convention $\inf \emptyset = \infty$.
\end{enumerate}
\end{lemma}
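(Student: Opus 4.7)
The three identities are purely algebraic consequences of the piecewise-constant structure of $B^n$ together with the definitions of the zoomed-in processes in Definition~\ref{ch3:def:zoomed-in}. The plan is to handle all three parts by the same two-step template. First, I would reduce each of the continuous-time infima defining $\tau_b^n$, $T_{\min,a}^n$, and $T_{\text{min},\infty}^n$ to a discrete optimization over the mesh values $\{B(k/n) : k \in \Z_+\}$, using that $B^n$ is constant on each half-open cell $[k/n, (k+1)/n)$. Second, I would perform the index shift $k = \lceil n t^{\ast}\rceil + j$ for $t^{\ast} \in \{\tau_b, T_{\min,a}, T_{\text{min},\infty}\}$ and use the algebraic identity $t^{\ast} + (\UU{n}{\ast} + j)/n = (\lceil n t^{\ast}\rceil + j)/n$ to translate $\sqrt n\,[B(k/n) - B(t^{\ast})]$ into $\ZZ{n}{\ast}(\UU{n}{\ast} + j)$, and similarly to translate $\sqrt n\,[b(k/n) - b(\tau_b)]$ into $\bb{n}{\hit,B}(\UU{n}{\hit,b}+j)$ in Part~1.

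For Part~1, the geometric input is that on each cell $[k/n, (k+1)/n)$ the difference $b(t) - B^n(t) = b(t) - B(k/n)$ is nondecreasing in $t$ by hypothesis~(H$_b$); consequently, the set $\{t > 0 : B^n(t) \geq b(t)\}$, if nonempty, first meets the mesh at some $k/n$ with $B(k/n) \geq b(k/n)$, so $\tau_b^n = j^{\ast}/n$ with $j^{\ast} := \min\{k \geq 1 : B(k/n) \geq b(k/n)\}$. Path-continuity of $B - b$ at the first crossing gives $B(\tau_b) = b(\tau_b)$ almost surely, and the absolute continuity of $B(k/n) - b(k/n)$ for each fixed $k$ lets me replace $\geq$ by $>$ on an a.s.\ event; moreover, for $k < \lceil n\tau_b\rceil$ the minimality of $\tau_b$ forces $B(k/n) < b(k/n)$, so after shifting, the index $j = k - \lceil n\tau_b\rceil$ may be restricted to $\Z_+$. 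Using $B(\tau_b) = b(\tau_b)$, the difference $\ZZ{n}{\hit,b}(\UU{n}{\hit,b}+j) - \bb{n}{\hit,B}(\UU{n}{\hit,b}+j)$ equals $\sqrt n\,[B(k/n) - b(k/n)]$; this matches the predicate inside $E_{\hit}$, and the two output components then produce $\UU{n}{\hit,b} + (j^{\ast} - \lceil n\tau_b\rceil) = n(\tau_b^n - \tau_b)$ and $\sqrt n\,(B^n(\tau_b^n) - B(\tau_b))$, respectively.

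Parts~2 and~3 are more direct because no barrier is involved. Piecewise-constancy of $B^n$ gives $\min_{s\in[0,a]} B^n(s) = \min_{0 \leq k \leq \lfloor na\rfloor} B(k/n)$, and $T_{\min,a}^n$ equals the smallest mesh point attaining this minimum; a.s.\ uniqueness of the argmin among Gaussian values at distinct mesh times (finitely many in Part~2, countably many in Part~3) makes $\arginf$ well-defined and forces $\inf = \min$ on the relevant set. The same index shift $k = \lceil n t^{\ast}\rceil + j$ rewrites the optimization range $\{0,\ldots,\lfloor na\rfloor\}$ as $\Z \cap [-\lceil n T_{\min,a}\rceil,\, na - \lceil n T_{\min,a}\rceil]$ for Part~2, and $\Z_+$ as $\Z \cap [-\lceil n T_{\text{min},\infty}\rceil, \infty)$ for Part~3; in Part~3 the hypothesis $T_{\text{min},\infty} < \infty$ ensures that $B(t) \to \infty$, which places the shifted zoomed-in process in $\CC_{\inf+}(\R)$ and guarantees the minimum is attained.

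The main obstacle is bookkeeping rather than probabilistic subtlety: aligning the $\lceil\cdot\rceil$ and $\lfloor\cdot\rfloor$ conventions with the half-open cells in the definition of $B^n$, handling the $t > 0$ constraint in the definition of $\tau_b$, and swapping $\geq$ for $>$ on an almost-sure event. No substantive probabilistic input is required beyond the a.s.\ absolute continuity of $B(t)$ at each fixed $t$; once the two-step reduction above is executed, each of the three claimed identities is tautological.
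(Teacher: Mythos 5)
Your proposal is correct and follows essentially the same route as the paper's proof: reduce each discretized hitting/minimization time to a discrete optimization over mesh points (using monotonicity of $b$ and $B(\tau_b)=b(\tau_b)$ for part~1, and the a.s.\ strictness/uniqueness of the relevant comparisons), then shift indices by $\lceil n t^{\ast}\rceil$ so that the mesh values become evaluations of the zoomed-in processes at $\UU{n}{\cdot}+k$. All the key ingredients the paper uses are present, so no gap.
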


The proof of the previous result is a straightforward but non-illuminating calculation, so it is deferred to Appendix~\ref{ch3:sec:appendix:1}.

\subsection{Weak convergence of the {zoomed-in} processes}\label{ch3:subsec:zoomed-in converge K}

In this section we show that the zoomed-in processes defined in Definition~\ref{ch3:def:zoomed-in} converge in distribution.
We endow the space $\CC (\R)$ of continuous real functions on $\R$ with the metric {\it of uniform convergence over compact sets}, say $\nK \cdot$, defined for all $f$ in $\CC (\R)$ as
$$\nK{f} := \sum_{\A=1}^\infty \frac{1}{2^\A} \min\left\{ 1 \ , \ \sup_{t \in [-\A, \A]} |f(t)| \right\}.$$
It holds that $\nK{f} = 0$ if and only if $\sup_{s \in [-\A, \A]} |f(s)| = 0$ for all $\A > 0$ finite, which motivates the name of the metric.
The space $\CC(\R)$ endowed with the topology generated by this metric is a {\it Polish} space, i.e., a complete and separable topological space; see e.g.~\cite{whitt1970weak} for further details.
We also use a one-sided version of the metric space $(\CC (\R), \nK{\cdot})$, which is defined analogously to the two-sided version $(\CC (\R), \nK{\cdot})$.

\begin{lemma}\label{ch3:lemma:scalings ucocs}
\begin{enumerate}
	\item Given $t>0$ the process $\ZZ{\m}{\hit,b}$ conditioned on  $\{ \tau_b = t \}$ converges in distribution on $(\CC (\R) , \ \nK{\cdot})$ to $\left( \sigma R(-s) : s \leq 0 \ ; \ \sigma W(s) : s \geq 0 \right)$ as $\m \to \infty$, where $W$ is a standard Brownian motion and $R$ is a standard Bessel(3) process independent of $W$.
	
	\item Given $t \in (0,a)$, $\lo < 0$ and $y > 0$, conditioned on $\{T_{\min,a} = t, \ B(T_{\min,a}) = \lo , \ B(a) = \lo+y\}$ the process $\ZZ{\m}{\min,a}$ converges in distribution on $(\CC (\R) , \ \nK{\cdot})$ to $\sigma R$ as $\m \to \infty$, where $R = \left( R (s) : s \in \R \right)$ is a two-sided standard Bessel(3) process.
	
	\item Given $t>0$ and $\lo<0$, conditioned on the event $\{T_{\min,\infty} = t, \ B(T_{\min,\infty}) = \lo\}$ the process $\ZZ{\m}{\min,\infty}$ converges in distribution on $(\CC (\R) , \ \nK{\cdot})$ to $\sigma R$ as $\m \to \infty$, where $R = \left( R (s) : s \in \R \right)$ is a standard two-sided Bessel(3) process.
\end{enumerate}
\end{lemma}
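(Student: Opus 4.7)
The argument rests on three classical Brownian path decompositions — the strong Markov property at $\tau_b$, Williams' time-reversal theorem, and the Williams--Imhof decomposition of Brownian motion about its minimum — combined with Brownian scaling. In each part I would first split $B$ into two conditionally independent pieces about the random time; then apply the spatial rescaling $x\mapsto\sqrt m\,x$ with time rescaling $u\mapsto u/m$ to each piece; and finally identify the limit on an arbitrary compact window $[-A,A]$. Drift contributions of order $\mu s/\sqrt m$ and barrier-slope offsets of the form $\sqrt m\,(b(\tau_b)-b(\tau_b-s/m))\approx b'(\tau_b)s/\sqrt m$ vanish under this zoom, so neither $\mu$, $b'(\tau_b)$, nor the minimum value $l$ or endpoint value $y$ enters the limit.

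\textbf{Part (1).} Conditional on $\{\tau_b=t\}$, strong Markov at $\tau_b$ makes the post-hitting increments $(B(\tau_b+u)-B(\tau_b):u\geq 0)$ a Brownian motion with drift $\mu$ and variance $\sigma^2$, independent of the pre-hitting path, so Brownian scaling shows that the $s\geq 0$ restriction of $\ZZ{m}{\hit,b}$ converges uniformly on compact sets to $\sigma W(s)$. For the pre-hitting piece, the time-reversed process $u\mapsto b(\tau_b)-B(\tau_b-u)$ is a nonnegative path starting at $0$ and hitting height $b(\tau_b)$ at time $t$ while staying above $b(\tau_b)-b(\tau_b-u)$; by Williams' time-reversal theorem, up to the $O(1/\sqrt m)$ drift and barrier-slope corrections, this has the law of a Bessel(3) bridge from $0$ to $b(\tau_b)$ of length $t$. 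Zooming by $\sqrt m$ in space and $1/m$ in time pushes both the bridge endpoint and its horizon to infinity, so on $[0,A]$ the rescaled process converges in law to a standard Bessel(3) $R$; independence of $W$ and $R$ comes from the strong Markov decomposition.

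\textbf{Parts (2) and (3).} Here I would invoke the Williams--Imhof decomposition: conditional on $\{T_{\min,a}=t,\,B(T_{\min,a})=l,\,B(a)=l+y\}$, the pre- and post-minimum pieces $(B(t-u)-l:0\leq u\leq t)$ and $(B(t+u)-l:0\leq u\leq a-t)$ are conditionally independent Bessel(3) bridges, of lengths $t$ and $a-t$, with endpoints $-l>0$ and $y>0$ respectively. Rescaling each by $\sqrt m$ in space and $1/m$ in time produces Bessel(3) bridges whose endpoints and horizons both tend to infinity; on any compact window $[-A,A]$ such bridges converge in law to independent standard Bessel(3) processes $R^-$ and $R^+$, which glue into the two-sided $R$. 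Part (3) is exactly analogous: under the conditioning (which requires $\mu>0$ for $T_{\min,\infty}<\infty$), the post-minimum piece is instead a Bessel(3) with drift $\mu$ by Williams' original theorem, and its zoom-in limit is again a standard Bessel(3) since the drift is $O(1/\sqrt m)$.

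\textbf{Main obstacle.} The technical core is the bridge-to-unconditioned-process convergence: on any $[-A,A]$, a Bessel(3) bridge whose length and endpoint both tend to infinity converges in law to a standard Bessel(3). I would establish this by writing the bridge as a standard Bessel(3) reweighted by an explicit Radon--Nikodym density of the form $p_{T-s/m}(X(s/m),z)/p_T(0,z)$ involving the Bessel(3) transition density, and checking that this density tends to $1$ uniformly on $[-A,A]$ as $m\to\infty$. Combined with standard Kolmogorov tightness estimates for Bessel and Brownian paths, this yields convergence in $(\CC(\R),\nK{\cdot})$. The remaining ingredients — strong Markov at $\tau_b$, Williams' time-reversal, the Imhof decomposition, and Brownian scaling — are classical and routine.
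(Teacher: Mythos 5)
Your proposal is correct and follows essentially the same route as the paper: strong Markov at $\tau_b$ plus Brownian scaling for the post-hitting piece, Williams' time-reversal for the pre-hitting piece, the Williams--Imhof decomposition about the minimum for parts (2) and (3), and the observation that drift and barrier-slope terms are $O(1/\sqrt{\m})$ after the zoom. The only difference is that the paper disposes of the key technical step --- convergence of the rescaled Bessel(3) bridge (endpoint and horizon both tending to infinity) to a standard Bessel(3) on compacts --- by citing Lemma~1 of \cite{asmussen1995discretization}, whereas you propose to re-derive it via a transition-density ratio plus tightness, which is a valid and more self-contained way to obtain the same ingredient.
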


We remark that in all three parts of Lemma~\ref{ch3:lemma:scalings ucocs} the convergence in distribution also holds unconditionally, since the limiting processes do not depend on the values of the conditioning.
Note that none of these results explicitly depend on the drift $\mu$ of the Brownian motion $B$.

\begin{proof}[Proof of Lemma~\ref{ch3:lemma:scalings ucocs}]
For the sake of clarity of the exposition, we always assume that the treatment is conditioned on the events $\{ \tau_b < \infty , \ \tau_b^n < \infty \}$ and $\{ T_{\min,\infty} < \infty \}$ when dealing with the processes $\ZZ{\m}{\hit,b}$ and $\ZZ{\m}{\min,\infty}$ respectively.

We start by proving (i).
By the strong Markov property the process $( \ZZ{1}{\hit,b} (s) = B(t+s)-b : s \geq 0 )$ conditioned on $\{ \tau_b = t \}$ is distributed as a Brownian motion with drift $\mu$ and variance $\sigma^2$.
In particular, $( \ZZ{1}{\hit,b} (s) : s \geq 0 )$ is independent of $( \ZZ{1}{\hit,b} (s) : s \leq 0 )$.
Then, by Brownian scaling $( \ZZ{\m}{\hit,b} (s) : s \geq 0)$ is equal in distribution to a Brownian motion with drift $\mu/\sqrt \m$ and variance $\sigma^2$.
Since it converges almost surely to $\sigma W$ on $(\CC (\R_+), \ \nK{\cdot})$ as $\m \to \infty$, where $W$ is a standard Brownian motion, then it also converges weakly on $(\CC (\R_+), \ \nK{\cdot})$.

On the other hand, the process $(-\ZZ{1}{\hit,b} (-s) = b - B(\tau_b-s) : s \in [0,t])$ conditioned on $\{ \tau_b = t \}$ is distributed as a Bessel(3) process conditioned on being at $b$ at time $t$, see~\cite[Theorem~3.4]{williams1974path}.
Applying~\cite[Lemma~1]{asmussen1995discretization} we obtain that $( -\sqrt \m \ZZ{1}{\hit,b} (-s / \m) : s \geq 0 )$ converges weakly to $\sigma R$ on $(\CC (\R_+), \ \nK{\cdot})$ as $\m \to \infty$, where $R$ is a Bessel(3) process independent of $W$.
This proves part (i).

Part (ii) corresponds to~\cite[Lemma~1]{asmussen1995discretization}.

Finally, we prove (iii).
The process $( \ZZ{1}{\min,\infty}(s) = B(T_{\min,\infty}+s)-B(T_{\min,\infty}) : s \geq 0 )$ conditioned on the event $\{T_{\min,\infty} = t, \ B(T_{\min,\infty}) = \lo \}$ is distributed as a Bessel(3) process with drift $\mu$, see~\cite[Corollary~3]{rogers1981markov}.
Then, by Brownian scaling of the Bessel processes, $( \sqrt \m \ZZ{1}{\min,\infty}(s / \m) = \ZZ{\m}{\min,\infty}(s) : s \geq 0 )$ is distributed as a Bessel(3) process with drift $\mu/\sqrt \m$.
Such a process converges almost surely in $(\CC (\R_+) , \ \nK{\cdot})$ to a Bessel(3) process with no drift, say to $R_+$, so it also converges in distribution to $R_+$.

On the other hand, by~\cite[Theorems~2.1 and~3.4]{williams1974path} we have that the process $( \ZZ{1}{\min,\infty}(-s) : s \in [0,t] )$ conditioned on $\{T_{\min,\infty} = t, \ B(T_{\min,\infty}) = \lo \}$ is distributed as a Bessel(3) process conditioned on being at $-\lo$ at time $t$.
We conclude using~\cite[Lemma~1]{asmussen1995discretization} that $( \sqrt \m \ZZ{1}{\min,\infty} (-s / \m) = \ZZ{\m}{\min,\infty} (-s) : s \geq 0 )$ converges weakly to $\sigma R_-$ on $(\CC (\R_+), \ \nK{\cdot})$ as $\m \to \infty$, where $R_-$ is a Bessel(3) process independent of $R_+$.
This proves (iii).
\end{proof}

\subsection{Proofs}\label{ch3:subsec:proof} 

In this section we prove Theorems~\ref{theo1}, \ref{theo2} and \ref{theo3}.
The idea is to apply the continuous mapping theorem, see~\cite[Chapter~3.4]{whitt2002stochastic}.
We do this inspired by Lemma~\ref{ch3:lemma:decomp}, which shows that the errors are mappings of the zoomed-in processes $\ZZ{n}{\hit,b}$, $\ZZ{n}{\min,a}$ and $\ZZ{n}{\min,\infty}$ and other random variables, and Lemma~\ref{ch3:lemma:scalings ucocs}, which shows weak convergence of the zoomed-in processes.

The following result shows the weak convergence of the random variables $\UU{n}{\hit,b}$, $\UU{n}{\hit,b}$ and~$\UU{n}{\hit,b}$ to uniform random variables.
This result motivates the weak convergence of the pairs $( \UU{n}{\hit,b} , \ \ZZ{n}{\hit,b} )$, $( \UU{n}{\min,a} , \ \ZZ{n}{\min,a} )$ and $( \UU{n}{\min,\infty} , \ \ZZ{n}{\min,\infty} )$ as $n \to \infty$, with all the limiting distributions independent of the random variables $\tau_b$, $T_{\min,a}$ and $T_{\min,\infty}$.
We defer its proof to Appendix~\ref{ch3:sec:appendix:3}.

\begin{lemma}\label{ch3:lemma:U convergence}
Consider a nonnegative random variable $T$ which has a distribution that is absolutely continuous with respect to the Lebesgue measure.
Then as $n \to \infty$, $\lceil n T \rceil -nT$ converges in distribution to a uniformly distributed random variable on $(0,1)$, which moreover is independent of $T$.
\end{lemma}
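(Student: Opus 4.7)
The plan is to establish joint convergence in distribution of the pair $(V_n, T) := (\lceil nT \rceil - nT, \, T)$ to $(U, T)$, where $U$ is uniform on $(0,1)$ and independent of $T$; the claimed distributional convergence of the first coordinate and the independence from $T$ are then immediate consequences. Since $V_n \in [0,1]$ the sequence $(V_n, T)$ is tight on $[0,1] \times \R_+$, and any subsequential weak limit must have second coordinate $T$. It therefore suffices to compute $\lim_n \EE[\phi(V_n) h(T)]$ for $\phi$ ranging over a sufficiently rich class and $h$ bounded continuous on $\R_+$, and to match these limits with the corresponding expectations under $(U, T)$.

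The key step is to take $\phi(v) = e^{2\pi i k v}$ for $k \in \Z$ and exploit that $\lceil nT \rceil$ is integer-valued, so $e^{2\pi i k \lceil nT \rceil} = 1$. This yields the identity
\[
e^{2\pi i k V_n} \;=\; e^{2\pi i k \lceil nT \rceil} \cdot e^{-2\pi i k nT} \;=\; e^{-2\pi i k nT},
\]
and hence, writing $f_T$ for the density of $T$,
\[
\EE\!\left[e^{2\pi i k V_n} h(T)\right] \;=\; \int_0^\infty e^{-2\pi i k n t} \, h(t) \, f_T(t) \, \dd t.
\]
For $k = 0$ the right-hand side equals $\EE[h(T)]$ for every $n$, while for $k \ne 0$ the function $t \mapsto h(t) f_T(t)$ lies in $L^1(\R_+)$ and the Riemann--Lebesgue lemma forces the integral to vanish as $n \to \infty$. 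Since $\EE[e^{2\pi i k U} h(T)] = \mathbf{1}_{\{k=0\}} \EE[h(T)]$, the limiting expectations agree with those under the target law $(U, T)$.

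The remaining step is to upgrade these pointwise-in-$k$ limits to weak convergence of $(V_n, T)$. The only subtlety, and the main (minor) obstacle, is that trigonometric polynomials in $v$ are dense in continuous functions on the circle $\R/\Z$ rather than on the closed interval $[0,1]$. This is handled by observing that because $T$ has a density, $nT$ is almost surely not an integer, so $V_n \in (0,1)$ almost surely, and $U \in (0,1)$ almost surely as well; both joint laws therefore put no mass on $\{0, 1\} \times \R_+$ and may be regarded as probability measures on $(\R/\Z) \times \R_+$. On this product space the family $\{(v,t) \mapsto e^{2\pi i k v} h(t) : k \in \Z, \ h \in C_b(\R_+)\}$ is measure-determining, so taking any weak-limit point of $(V_n, T)$ along a subsequence and identifying it via the limits computed above concludes that $(V_n, T) \Rightarrow (U, T)$, giving both the distributional convergence and the asserted independence in one stroke.
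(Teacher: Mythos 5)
Your proof is correct, but it takes a genuinely different route from the paper's. The paper argues directly with the conditional density: it writes $\PP(\lceil nT\rceil - nT \le u \mid T \le t)$ as $\sum_{k}\int_{(k-u)/n}^{k/n} f_t(\vvv)\,\dd\vvv$ and compares this with the Riemann-type sum $\sum_k \int_{(k-u)/n}^{k/n} f_t(k/n)\,\dd\vvv = u$, invoking Riemann integrability of the density to control the difference; independence from $T$ is obtained by letting the conditioning event $\{T\le t\}$ vary. You instead exploit the algebraic identity $e^{2\pi i k(\lceil nT\rceil - nT)} = e^{-2\pi i k nT}$ and kill the $k\neq 0$ modes with the Riemann--Lebesgue lemma, then identify the joint limit of $(V_n,T)$ on $(\R/\Z)\times\R_+$ via the measure-determining family $e^{2\pi i kv}h(t)$, taking care that neither law charges the seam $\{0,1\}$. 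Your version has two advantages: it only requires $h f_T \in L^1$, so it sidesteps the paper's implicit (and not fully justified) assumption that the density is Riemann integrable, and it delivers the joint convergence $(V_n,T)\Rightarrow(U,T)$ --- hence the asserted independence --- in a single stroke rather than through conditioning on $\{T\le t\}$. The paper's argument, in exchange, is more elementary and self-contained, using nothing beyond basic properties of integrals.
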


With the previous result we are now able to prove the main results of this paper.

\begin{proof}[Proof of Theorem~\ref{theo1}]
First recall that by Lemma~\ref{ch3:lemma:decomp}, conditioned on $\{ \tau_b < \infty , \ \tau_b^n < \infty \}$ we have that
$$(n \left( \tau_b^n - \tau_b \right) , \ \sqrt n \left( B^n(\tau_b^n) - B(\tau_b) \right) ) = E_{\hit} \left( \UU{n}{\hit,b} , \ \ZZ{n}{\hit,b} , \ \bb{n}{\hit,B} \right).$$
The plan of the proof is to first show that the triplet
\begin{eqnarray}\label{ch3:eq:triplet Ehit}
& \left( \UU{n}{\hit,b} , \ \ZZ{n}{\hit,b} , \ \bb{n}{\hit,B} \right)
\end{eqnarray}
converges in distribution, then show that the function $E_{\hit}$ is continuous, and conclude the desired convergence of the errors using the continuous mapping theorem.
We will use the metric of uniform convergence on compact sets $\nK \cdot$ for the weak convergence of $\ZZ{n}{\hit,b}$ and for the continuity of the mapping $E_{\hit}$.

We first argue that the triplet in \eqref{ch3:eq:triplet Ehit} converges in distribution on $(\R \times \CC (\R) \times \CC (\R_+) , |\cdot| \times \nK{\cdot} \times \nK{\cdot})$ where, recall, $\nK \cdot$ is the metric of uniform convergence on compact sets.
Indeed, by Lemmas~\ref{ch3:lemma:scalings ucocs} and~\ref{ch3:lemma:U convergence} conditional on $\{ \tau_b < \infty \}$ the pair $(\UU{n}{\hit,b} , \ \ZZ{n}{\hit,b} )$ converges in distribution to $(U, \ (-\sigma R(-s): s \leq 0 ; \sigma W(s) : s \geq 0))$, where the weak convergence of $\ZZ{n}{\hit,b}$ is on $(\CC (\R) , \nK{\cdot})$.
Here, $R$ and $W$ are standard Bessel(3) and Brownian motion processes, respectively; $U$ is uniformly distributed on $(0,1)$; and  $R$, $W$, $U$ and $\tau_b$ are all independent.
Additionally, the function $\bb{n}{\hit,B}$ almost surely converges to $0$ on $(\CC (\R_+) , \nK{\cdot})$, i.e., to the zero function.
Indeed, $\bb{n}{\hit,B}$ is continuously differentiable on $\R_+ \!\! \setminus \!\! \{ 0 \}$ by Assumption (H$_b$), and $\tau_b > 0$ almost surely, so for all $k>0$ it holds almost surely that
$$\sup_{t \in [0, k]} \left| \bb{n}{\hit,B} \right| = \sqrt n \sup_{t \in [0, k]} \left| b\left(\tau_b + \frac{t}{n} \right) - b(\tau_b) \right| = \sqrt n \sup_{t \in [0, k]} \left| b' \left(\tau_b + \frac{\xi}{n} \right) \frac{t}{n} \right| \to 0$$
as $n \to \infty$, where the last equality holds for some $\xi \in (0, k)$ by the mean value theorem.
With this, we conclude that the triplet~\eqref{ch3:eq:triplet Ehit} converges in distribution as $n \to \infty$ to the triplet
\begin{eqnarray}\label{ch3:limit triplet Ehit}
& (U, \ (-R(-s): s \leq 0 ; W(s) : s \geq 0), \ 0)
\end{eqnarray}
on $(\R \times \CC (\R) \times \CC (\R_+) , |\cdot| \times \nK{\cdot} \times \nK{\cdot})$.

The next step is to show that the mapping $E_{\hit}$ defined in \eqref{ch3:def:E hit} is continuous on a measurable set, say $S_{\hit}$, containing the support of the limiting random variable \eqref{ch3:limit triplet Ehit}.
We consider the set $S_{\hit}$ consisting on the triplets $(u,w,0) \in (0,1) \times \CC(\R_+) \times \{0\}$ satisfying $\sup_{k \in \Z_+} w(u+k) > 0$ and $w(u+k) \neq 0$ for all $k \in \Z_+$.
Clearly it is measurable and the support of \eqref{ch3:limit triplet Ehit} is contained in $S_{\hit}$.
In particular, the mapping $E_{\hit}$ on $S_{\hit}$ takes values in $\R^2$.
Note now that by composition of continuous functions it is sufficient to only show the continuity of the mapping
\begin{eqnarray}\label{ch3:function proof 1}
(u, w, 0) \in S_{\hit} &\mapsto& \min\{k  \in \Z_+ : w(u+k) > 0\}.
\end{eqnarray}
To see that the function \eqref{ch3:function proof 1} is continuous, first take any $(u, w, 0) \in S_{\hit}$ and a sequence $((\UU{n}{\min,a}, w_n) : n \geq 0)$ in $\R \times \CC(\R_+)$ such that $\nK{ w_n - w } + |\UU{n}{\min,a} - u| \to 0$ as $n \to \infty$.
Denote $K^* := \min\{k  \in \Z_+ : w(u+k) > 0\}$ and $\delta^* := \min \{ |w(u+k)| :  k = 0, \ldots, K^* \}$, and note that $\delta^* > 0$ by definition of $S_{\hit}$.
Since $(\UU{n}{\min,a}, w_n) \to (u, w)$ on $| \cdot | \times \nK{\cdot}$ then in particular $\sup_{k \in \Z_+ \cap [0, K^*]} \left| w(u+k) - w_n(\UU{n}{\min,a}+k) \right| < \delta^*$ for all $n \geq n^*$, for some $n^*$ sufficiently large.
In particular, for all $n \geq n^*$ the values $w_n(\UU{n}{\min,a}+k)$, $k = 0, \ldots, n^*$, are all different from zero and have the same sign of $w(u+k)$, $k = 0, \ldots, n^*$, respectively.
This implies that $\min\{k  \in \Z_+ : w_n(\UU{n}{\min,a}+k) > 0\} = K^* = \min\{k  \in \Z_+ : w(u+k) > 0\}$ for all $n \geq n^*$.
We have shown that the mapping \eqref{ch3:function proof 1}, and thus $E_{\hit}$, is continuous on $S_{\hit}$, which contains the support of the limiting random variable \eqref{ch3:limit triplet Ehit}.

It follows that applying the continuous mapping theorem, see~\cite[Theorem~3.4.3]{whitt2002stochastic},  the random variable $E_{\hit} ( \UU{n}{\hit,b} , \ \ZZ{n}{\hit,b} , \ \bb{\m}{\hit,B} )$ converges in distribution to $E_{\hit} ( U , \ (-\sigma R , \sigma W) , \ 0)$ as $n \to \infty$.
Lastly, note that $\PP (\tau_b^n < \infty | \tau_b < \infty ) \to 1$ as $n \to \infty$, so we can drop the condition $\tau_b^n < \infty $.
This concludes the proof of the Theorem~\ref{theo1}.
\end{proof}

\begin{proof}[Proof of Theorems~\ref{theo2} and~\ref{theo3}]
We show only the proof of Theorem~\ref{theo2}, since the proof of Theorem~\ref{theo3} is analogous.

To prove the joint convergence of the normalized discretization errors $ n ( T_{\min,a}^n - T_{\min,a} )$ and $\sqrt n ( B^n(T_{\min,a}^n) - B(T_{\min,a}) )$ recall first that by Lemma~\ref{ch3:lemma:decomp} they can be written as
\begin{eqnarray*}
\left(  \begin{array}{c} n \left( T_{\min,a}^n - T_{\min,a} \right) \\ \sqrt n \left( B^n(T_{\min,a}^n) - B(T_{\min,a}) \right) \end{array}  \right) = E_{\min,a}^{(n)} \left( \UU{n}{\min,a} ,\ \ZZ{n}{\min,a} , \ T_{\min,a} \right).
\end{eqnarray*}
Inspired on this we first show that the following vector converges in distribution as $n \to \infty$:
\begin{eqnarray}\label{ch3:eq:vector E min}
& \left( \UU{n}{\min,a} ,\ \ZZ{n}{\min,a} , \ T_{\min,a} \right).
\end{eqnarray}
We would then like to conclude the joint convergence of the normalized errors by using the generalized continuous mapping theorem, see~\cite[Theorem~3.4.4]{whitt2002stochastic}.
This considers showing that, in a sense, the mapping $\lim_{n \to \infty} E_{\min,a}^{(n)}$ is continuous for the metric $\nK \cdot$ on the support of the limiting distribution of \eqref{ch3:eq:vector E min}; however this is not true.
Nonetheless, we show that this problem can be circumvented by first restricting to compact time intervals, then using there the continuous mapping theorem, and then increasing the size of the compact time interval.

We start by arguing that as $n \to \infty$ the random variable in \eqref{ch3:eq:vector E min} converges in distribution to $(U, \ \sigma R, \ T_{\min,a})$, where $R$ is a standard Bessel(3) process, $U$ is uniformly distributed on $(0,1)$, and  $R$, $U$ and $T_{\min,a}$ are all independent.
Indeed, by Lemmas~\ref{ch3:lemma:scalings ucocs} and~\ref{ch3:lemma:U convergence}, as $n \to \infty$ the pair $( \UU{n}{\min,a} ,\ \ZZ{n}{\min,a} )$ converges in distribution to $(U, \ \sigma R)$, where the weak convergence of $\ZZ{n}{\min,b}$ is on $(\CC (\R) , \nK{\cdot})$, and moreover $U$ and $R$ are independent of $T_{\min,a}$.

The rest of the proof consists on showing that the following limit in distribution holds
\begin{eqnarray}\label{ch3:lim A n 3}
\lim_{n \to \infty} \ E_{\min,a}^{(n)} \left( \UU{n}{\min,a} ,\ \ZZ{n}{\min,a} , \ T_{\min,a} \right)  &=& E_{\min,a}^{(\infty)} \left( U ,\ \sigma R , \ T_{\min,a} \right),
\end{eqnarray}
where the mapping $E_{\min,a}^{(\infty)}$ is defined as
\begin{eqnarray}\label{ch3:def:E min inf}
E_{\min,a}^{(\infty)} (u, f, t) &:=& \left( u + \arginf_{k \in \Z} f(u+k) , \ \inf_{k \in \Z} f(u+k) \right),
\end{eqnarray}
where $f$ takes values in $\CC_{\inf\pm} (\R) :=\{f \in \CC(\R) : \lim_{t \to \pm \infty} f(t) = \infty\}$.
By Lemma~\ref{ch3:lemma:decomp} this would conclude the proof of Theorem~\ref{theo2}.
To prove the limit~\eqref{ch3:lim A n 3} we first restrict, in a sense, the mappings $E_{\min,a}^{(\m)}$ to compact time intervals of the form $[-\A,\A]$, then prove the weak convergence there, and then take the limit $\A \to \infty$.

Define for all $\m \in \Z_+$ and $\A>0$ the mapping $E_{\min,a}^{(\m,\A)} : (0,1) \times \CC (\R) \times (0,a) \to \R^2$ as
	\begin{eqnarray}\label{ch3:def:E min a A}
	E_{\min,a}^{(\m,\A)} (u, f, t) &:=& \left( \begin{array}{r} u + \arginf_{k \in \Z \cap [-\lceil \m t \rceil, \ \m a-\lceil \m t \rceil] \cap [-\A,\A]} f(u+k) \\ \inf_{k \in \Z \cap [-\lceil \m t \rceil, \ \m a-\lceil \m t \rceil] \cap [-\A,\A]} f(u+k) \end{array} \right),
	\end{eqnarray}
	where we have abused notation and actually denote $\arginf_{s \in N} g(s) := \inf\{ s \in N : g(s) = \inf_{u \in N} g(u) \}$ for $g \in \CC(\R)$, $N \subseteq \R$ compact and $\inf\emptyset := +\infty$.
By the generalized continuous mapping theorem, see~\cite[Theorem~3.4.4]{whitt2002stochastic}, as $n \to \infty$ the vector $E_{\min,a}^{(n,\A)} \left( \UU{n}{\min,a} ,\ \ZZ{n}{\min,a} , \ T_{\min,a} \right)$ converges in distribution  to $E_{\min,a}^{(\infty,\A)} \left( U ,\ \sigma R , \ T_{\min,a} \right)$, where
\begin{eqnarray}\label{ch3:def:E min infty a A}
E_{\min,a}^{(\infty,\A)} \left( u ,\ r , \ t \right) &:=& \left( u + \argmin_{k \in \Z \cap [-\A,\A]} r(u+k) , \ \min_{k \in \Z \cap [-\A,\A]} r(u+k) \right).
\end{eqnarray}
Indeed, for all $(u,r,t) \in (0,1) \times \CC (\R) \times (0,a)$ such that all the values $\{ r(u+k) : k \in \Z \}$ are different it holds that $E_{\min,a}^{(n,\A)} \left( u_n ,\ r_n , \ t_n \right) \to E_{\min,a}^{(\infty,\A)} \left( u ,\ r , \ t \right)$ for any sequence $(u_n,r_n,t_n)$ in $(0,1) \times \CC (\R) \times (0,a)$ such that $u_n \to u$, $r_n \to r$ and $t_n \to t$, where the convergence of $r_n$ is with the norm $\nK \cdot$.
This implies that, with probability one, the limiting random variable $(U, \ \sigma R, \ T_{\min,a})$ does not take values on the set where $E_{\min,a}^{(\infty,\A)}$ is discontinuous; therefore the generalized continuous mapping theorem applies and we obtain the desired convergence in distribution.

Now note that the following limit in distribution holds 
$$\lim_{\A \to \infty} \ E_{\min,a}^{(\infty,\A)} \left( U ,\ \sigma R , \ T_{\min,a} \right) = E_{\min,a}^{(\infty)} \left( U ,\ \sigma R , \ T_{\min,a} \right),$$
with $E_{\min,a}^{(\infty)}$ is as defined in~\eqref{ch3:def:E min inf}, because the convergence actually holds almost surely.
Therefore, abusing notation we conclude that
\begin{eqnarray}\label{ch3:lim A n}
\lim_{\A \to \infty} \, \lim_{n \to \infty} \, E_{\min,a}^{(n,\A)} \left( \UU{n}{\min,a} ,\, \ZZ{n}{\min,a} , \, T_{\min,a} \right) &=& E_{\min,a}^{(\infty)} \left( U ,\ \sigma R , \ T_{\min,a} \right),
\end{eqnarray}
where both limits and the equality are in distribution.

On the other hand, for all $n$
\begin{eqnarray}\label{ch3:lim A n 2}
\lim_{\A \to \infty} E_{\min,a}^{(n,A)} \left( \UU{n}{\min,a} ,\ \ZZ{n}{\min,a} , \ T_{\min,a} \right) &=& E_{\min,a}^{(n)} \left( \UU{n}{\min,a} ,\ \ZZ{n}{\min,a} , \ T_{\min,a} \right)
\end{eqnarray}
in distribution, since the convergence holds almost surely.

We now show that, in a sense, the following interchange of limits holds
$$\lim_{n \to \infty} \, \lim_{\A \to \infty} \, E_{\min,a}^{(n,A)} \left( \UU{n}{\min,a} ,\, \ZZ{n}{\min,a} , \, T_{\min,a} \right) = \lim_{\A \to \infty} \, \lim_{n \to \infty} \, E_{\min,a}^{(n,A)} \left( \UU{n}{\min,a} ,\, \ZZ{n}{\min,a} , \, T_{\min,a} \right),$$
where the limits and the equality are in distribution; note that this would conclude the limit in distribution~\eqref{ch3:lim A n 3}.
The latter interchange of limits, we will see, is a consequence of the limit
\begin{align}\label{ch3:lim:AGP95 lemma 2}
& \lim_{\A \to \infty} \limsup_{n \to \infty} \PP \left( \min_{k \in \Z \cap [-\lceil n T_{\min,a} \rceil, \ n a-\lceil n T_{\min,a} \rceil] \cap [-\A,\A]^\mathsf{C}} \ZZ{n}{\min,a} \left( k + \UU{n}{\min,a} \right) \! < \! \ttt \right) = 0,
\end{align}
which holds for all $\ttt \in \R$.
Indeed, \eqref{ch3:lim:AGP95 lemma 2} is just Lemma~4 of~\cite{asmussen1995discretization}, which is easily checked by using the definitions of $\ZZ{n}{\min,a}$ and $\UU{n}{\min,a}$ in Definition~\ref{ch3:def:zoomed-in}.
The limit~\eqref{ch3:lim:AGP95 lemma 2} in turn implies that the $\limsup_{\A \to \infty}$ of the $\limsup_{n \to \infty}$ of the following probability converges to zero:
\begin{align}\label{ch3:lim:AGP95 lemma 2 A}
& \PP \left( \UU{n}{\min,a} + \argmin_{k \in \Z \cap [-\lceil n T_{\min,a} \rceil, \ n a-\lceil n T_{\min,a} \rceil]} \ZZ{n}{\min,a} \left( k + \UU{n}{\min,a} \right) \in [-\A,\A]^\mathsf{C} \right).
\end{align}
Indeed, for all $\xi \in \R$ we have
\begin{eqnarray*}
\lefteqn{\PP \left( \UU{n}{\min,a} + \argmin_{k \in \Z \cap [-\lceil n T_{\min,a} \rceil, \ n a-\lceil n T_{\min,a} \rceil]} \ZZ{n}{\min,a}(k + \UU{n}{\min,a}) \in [-\A,\A]^\mathsf{C} \right) } \\
&& = \PP \left( \min_{k \in \Z \cap [-\lceil n T_{\min,a} \rceil, \ n a-\lceil n T_{\min,a} \rceil] \cap [-\A,\A]^\mathsf{C}} \ZZ{n}{\min,a}(k + \UU{n}{\min,a}) \right. \\
&& \qquad\qquad \left. {} < \min_{k \in \Z \cap [-\lceil n T_{\min,a} \rceil, \ n a-\lceil n T_{\min,a} \rceil] \cap [-\A,\A]} \ZZ{n}{\min,a}(k + \UU{n}{\min,a}) \right) \\
&& = \PP \left( \ldots, \ \min_{k \in \Z \cap [-\lceil n T_{\min,a} \rceil, \ n a-\lceil n T_{\min,a} \rceil] \cap [-\A,\A]^\mathsf{C}} \ZZ{n}{\min,a}(k + \UU{n}{\min,a}) < \xi \right) \\
&& \qquad {}+ \PP \left( \ldots, \ \min_{k \in \Z \cap [-\lceil n T_{\min,a} \rceil, \ n a-\lceil n T_{\min,a} \rceil] \cap [-\A,\A]^\mathsf{C}} \ZZ{n}{\min,a}(k + \UU{n}{\min,a}) \geq \xi \right) \\
&& \leq \PP \left( \min_{k \in \Z \cap [-\lceil n T_{\min,a} \rceil, \ n a-\lceil n T_{\min,a} \rceil] \cap [-\A,\A]^\mathsf{C}} \ZZ{n}{\min,a}(k + \UU{n}{\min,a}) < \xi \right) \\
&& \qquad {}+ \PP \left( \min_{k \in \Z \cap [-\lceil n T_{\min,a} \rceil, \ n a-\lceil n T_{\min,a} \rceil] \cap [-\A,\A]} \ZZ{n}{\min,a}(k + \UU{n}{\min,a}) > \xi \right).
\end{eqnarray*}
By the limits~\eqref{ch3:lim A n} and~\eqref{ch3:lim:AGP95 lemma 2} it follows that the $\limsup_{\A \to \infty} \limsup_{n \to \infty}$ of the term~\eqref{ch3:lim:AGP95 lemma 2 A} is
\begin{eqnarray*}
& \leq \PP \left( \min_{k \in \Z} \sigma R (k + U) > \xi \right).
\end{eqnarray*}
Note that this holds for all $\xi \in \R$, so taking $\xi \to \infty$ we obtain the desired limit; that is, that the $\limsup_{\A \to \infty} \limsup_{n \to \infty}$ of the term~\eqref{ch3:lim:AGP95 lemma 2 A} converges to zero.

It follows that for all $s,t,u$ in~$\R$ we have
\begin{eqnarray*}
\lefteqn{ \left\lvert \PP \left( \UU{n}{\min,a} + \argmin_{k \in \Z \cap [-\lceil n T_{\min,a} \rceil, \ n a-\lceil n T_{\min,a} \rceil]} \ZZ{n}{\min,a} (k+\UU{n}{\min,a}) \leq s, \right.\right. } \\
&&\qquad\qquad\ \left. \min_{k \in \Z \cap [-\lceil n T_{\min,a} \rceil, \ n a-\lceil n T_{\min,a} \rceil]} \ZZ{n}{\min,a} (k+\UU{n}{\min,a}) \leq t, \ \UU{n}{\min,a} \leq u \right) \\
&& {} - \PP \left( \UU{n}{\min,a} + \argmin_{k \in \Z \cap [-\lceil n T_{\min,a} \rceil, \ n a-\lceil n T_{\min,a} \rceil] \cap [-\A,\A]} \ZZ{n}{\min,a} (k+\UU{n}{\min,a}) \leq s, \right. \\
&& \qquad\qquad\ \left. \left. \min_{k \in \Z \cap [-\lceil n T_{\min,a} \rceil, \ n a-\lceil n T_{\min,a} \rceil] \cap [-\A,\A]} \ZZ{n}{\min,a} (k+\UU{n}{\min,a}) \leq t, \ \UU{n}{\min,a} \leq u \right) \right\rvert  \\
&& \leq \PP \left( \UU{n}{\min,a} + \argmin_{k \in \Z \cap [-\lceil n T_{\min,a} \rceil, \ n a-\lceil n T_{\min,a} \rceil]} \ZZ{n}{\min,a} (k+\UU{n}{\min,a}) \in [-\A,\A]^\mathsf{C} \right) \\
&& \qquad\qquad\ {} + \PP \left( \min_{k \in \Z \cap [-\lceil n T_{\min,a} \rceil, \ n a-\lceil n T_{\min,a} \rceil] \cap [-\A,\A]^\mathsf{C}} \ZZ{n}{\min,a} (k+\UU{n}{\min,a}) \leq t \right) .
\end{eqnarray*}
Using the limits~\eqref{ch3:lim:AGP95 lemma 2} and~\eqref{ch3:lim:AGP95 lemma 2 A} one obtains that the difference in the previous display goes to zero when taking $\limsup_{n \to \infty}$ and then $\limsup_{\A \to \infty}$.
Together with the limit~\eqref{ch3:lim A n} we conclude the limit in distribution
\begin{align*}
& \lim_{n \to \infty} \ E_{\min,a}^{(n)} \left( \UU{n}{\min,a} ,\ \ZZ{n}{\min,a} , \ T_{\min,a} \right)  = E_{\min,a}^{(\infty)} \left( U ,\ \sigma R , \ T_{\min,a} \right),
\end{align*}
i.e., the limit \eqref{ch3:lim A n 3}, which is what we wanted to prove.

This proves Theorem~\ref{theo2}.
The proof of Theorem~\ref{theo3} is analogous.
\end{proof}

\section*{Acknowledgments}

We gratefully acknowledge support from NSF under grant CMMI-1252878, and also from the program ``Becas de Doctorado en el Extranjero - Becas Chile - CONICYT'' under grant~72110679.
This paper is based on the second author's 2016 PhD thesis at Georgia Institute of Technology.
After defending this thesis, the authors became aware of the recent paper by \cite{ivanovs2016zooming}, which presents a result that generalizes our Theorem~\ref{theo2} to other L\'evy processes.

\appendix\label{ch3:sec:appendix}
\section{Proof of Lemma~1}\label{ch3:sec:appendix:1}


\begin{proof}[Proof of Lemma~\ref{ch3:lemma:decomp}]
We first prove (i).
Using that $\tau_b^n \geq \tau_b$ almost surely, because $B^n(t) = B(\lfloor nt \rfloor / n)$ is piecewise constant as a function of $t$ and $b$ is nondecreasing, we obtain that
\begin{eqnarray*}
\lefteqn{ n \left( \tau_b^n - \tau_b \right) = n \min\{q \in \Z_+/n \cap \left[ \tau_b, \infty \right) : B(q) \geq b(q) \} - n \tau_b } \\
&& = n \min\{q \in \Z_+/n \cap \left[ \tau_b, \infty \right) : B(q) > b(q) \} - n \tau_b, \qquad\qquad\qquad\qquad\qquad\qquad\qquad\quad
\end{eqnarray*}
where the second equality holds almost surely for the Wiener measure.
By $B(\tau_b)=b(\tau_b)$ it follows that
\begin{eqnarray*}
&&= \min \left\{\! k \! \in \! \Z_+ \cap \left[ \lceil n \tau_b \rceil, \infty \right) : \sqrt n \left( B\left(\frac{k}{n}\right) -B\left(\tau_b\right) \right) > \sqrt n \left( b\left(\frac{k}{n}\right)-b\left(\tau_b\right) \right) \right\} \\
&& \qquad\qquad- n\tau_b \\
&&= \min \left\{\! k \! \in \! \Z_+ : \sqrt n \left( B \left( \frac{k + \lceil n\tau_b \rceil}{n} \right) -B(\tau_b) \right) \! > \! \sqrt n \left( b \left( \frac{k + \lceil n\tau_b \rceil}{n} \right) -b(\tau_b) \right) \right\} \\
&& \qquad\qquad+ \lceil n\tau_b \rceil - n\tau_b \\
&&= \min \left\{\! k \! \in \! \Z_+ : \ZZ{n}{\hit,b} \left( k + \UU{n}{\hit,b} \right) > \sqrt n \left( b \left(\tau_b + \frac{k + \UU{n}{\hit,b}}{n} \right) -b(\tau_b) \right) \right\} + \UU{n}{\hit,b},
\end{eqnarray*}
where we used that $\UU{n}{\hit,b} = \lceil n\tau_b \rceil - n\tau_b$ by definition.
Now noting that $\tau_b^n \in \Z_+/n$ we can use the previous identity for $\tau_b^n$ to obtain that
\begin{eqnarray*}
\lefteqn{ \sqrt n \left( B^n (\tau_b^n) - B(\tau_b) \right) = \sqrt n \left( B \left( \tau_b^n \right) - B(\tau_b) \right) } \\
&& = \sqrt n \left( B \left( \tau_b + \frac{n \left( \tau_b^n - \tau_b \right)}{n} \right) - B(\tau_b) \right) \\
&& = \ZZ{n}{\hit,b} \left( n \left( \tau_b^n - \tau_b \right) \right) \\
&& = \ZZ{n}{\hit,b} \! \left( \! \min \{ k \in \Z_+ : \ZZ{n}{\hit,b} \left( k + \UU{n}{\hit,b} \right) \! > \! \sqrt n ( b (\tau_b \! + \! \frac{k + \UU{n}{\hit,b}}{n} ) \! - \! b(\tau_b) ) \} \! + \! \UU{n}{\hit,b} \! \right).
\end{eqnarray*}

We now prove (ii).
Using the definition $\UU{n}{\min,a} = \lceil nT_{\min,a} \rceil - n T_{\min,a}$ it holds
\begin{eqnarray*}
\lefteqn{ n \left( T_{\min,a}^n - T_{\min,a} \right) = n \left( - T_{\min,a} + \argmin_{q \in \Z_+/n \cap [0,a]} B\left( q \right) \right) } \\
&& = - n T_{\min,a} + \argmin_{k \in \Z_+ \cap [0,na]} B\left( \frac{k}{n} \right) \\
&& = - n T_{\min,a} + \argmin_{k \in \Z_+ \cap [-\lceil nT_{\min,a} \rceil, na - \lceil nT_{\min,a} \rceil]} B\left( \frac{k+\lceil nT_{\min,a} \rceil}{n} \right) + \lceil nT_{\min,a} \rceil \\
&& = \UU{n}{\min,a} + \argmin_{k \in \Z_+ \cap [-\lceil nT_{\min,a} \rceil, na - \lceil nT_{\min,a} \rceil]} B\left( T_{\min,a}+ \frac{k+\UU{n}{\min,a}}{n} \right) \\
&& = \UU{n}{\min,a} \! + \! \argmin_{k \in \Z_+ \cap [-\lceil nT_{\min,a} \rceil, na - \lceil nT_{\min,a} \rceil]} \sqrt n \left( B\left( T_{\min,a}+ \frac{k+\UU{n}{\min,a}}{n} \right) \! - \! B\left( T_{\min,a} \right) \right) \\
&& = \UU{n}{\min,a} + \argmin_{k \in \Z_+ \cap [-\lceil nT_{\min,a} \rceil, na - \lceil nT_{\min,a} \rceil]} \ZZ{n}{\min,a}\left( k+\UU{n}{\min,a} \right),
\end{eqnarray*}
and since $T_{\min,a}^n \in \Z_+/n$ then using the previous identity for $T_{\min,a}^n$ we obtain that
\begin{eqnarray*}
\lefteqn{ \sqrt n \left( B^n \left( T_{\min,a}^n \right) - B\left( T_{\min,a}\right) \right) = \sqrt n \left( B \left( T_{\min,a}^n \right) - B\left( T_{\min,a}\right) \right) } \\
&& = \sqrt n \left( B \left( T_{\min,a} + \frac{n \left( T_{\min,a}^n - T_{\min,a} \right)}{n} \right) - B\left( T_{\min,a}\right) \right) \\
&& = \ZZ{n}{\min,a} \left( n \left( T_{\min,a}^n - T_{\min,a} \right) \right) \\
&& = \ZZ{n}{\min,a} \left( \UU{n}{\min,a} +  \argmin_{k \in \Z_+ \cap [-\lceil nT_{\min,a} \rceil, na - \lceil nT_{\min,a} \rceil]} \ZZ{n}{\min,a}\left( k+\UU{n}{\min,a} \right) \right) \\
&& = \min_{k \in \Z_+ \cap [-\lceil nT_{\min,a} \rceil, na - \lceil nT_{\min,a} \rceil]} \ZZ{n}{\min,a}\left( k+\UU{n}{\min,a} \right) .
\end{eqnarray*}

The proof of (iii) is analogous to the two previous ones.
\end{proof}

\section{Proof of Lemma~3}\label{ch3:sec:appendix:3}

\begin{proof}[Proof of Lemma~\ref{ch3:lemma:U convergence}]
It will be sufficient to prove that for all $u \in (0,1)$ and all $t$ such that $\PP(T \leq t)>0$ we have $\PP \left( \left. \lceil n T \rceil -nT \leq u \, \right| \, T \leq t \right) \to u$ as $n \to \infty$.

For that, note that for all $n$ we have
\begin{eqnarray*}
\lefteqn{ \PP\left( \left. \lceil n T \rceil -nT < u \right\rvert T \leq t \right)
= \sum_{k=1}^\infty \PP\left( \left. \left\lceil nT \right\rceil - nT \leq u , \ \left\lceil nT \right\rceil = k \right\rvert T \leq t \right) } \\
&&= \sum_{k=1}^\infty \PP\left( \left. T \in \left[\frac{k-u}{n}, \frac{k}{n} \right] \right\rvert T \leq t \right)
= \sum_{k=1}^\infty \int_{(k-u)/n}^{k/n} f_t (\vvv) \dd \vvv,
\end{eqnarray*}
where $f_t$ is the density with respect to the Lebesgue measure of $T$ conditioned on $\{ T \leq t \}$.
On the other hand, it also holds that
\begin{eqnarray*}
\lefteqn{ \sum_{k=1}^\infty \int_{(k-u)/n}^{k/n} f_t (k/n) \dd \vvv
= \sum_{k=1}^\infty f_t (k/n) \left(\frac{k}{n}-\frac{k-u}{n}\right)
= \sum_{k=1}^\infty f_t (k/n) \frac{u}{n} } \\
&&= \sum_{k=1}^\infty f_t (k/n) u \left(\frac{k}{n}-\frac{k-1}{n}\right)
= u \sum_{k=1}^\infty \int_{(k-1)/n}^{k/n} f_t (k/n) \dd \vvv,
\end{eqnarray*}
and additionally
\begin{eqnarray*}
u \sum_{k=1}^\infty \int_{(k-1)/n}^{k/n} f_t (\vvv) \dd \vvv
= u \int_0^1 f_t (\vvv) \dd \vvv
= u.
\end{eqnarray*}

It follows that since $f_t$ is Riemann-integrable then as $n \to \infty$
$$\left\lvert
\sum_{k=1}^\infty \int_{(k-u)/n}^{k/n} f_t (\vvv) \dd \vvv
-
\sum_{k=1}^\infty \int_{(k-u)/n}^{k/n} f_t (k/n) \dd \vvv
\right\rvert
\to 0$$
and
$$\left\lvert
\sum_{k=1}^\infty \int_{(k-1)/n}^{k/n} f_t (k/n) \dd \vvv
-
\sum_{k=1}^\infty \int_{(k-1)/n}^{k/n} f_t (\vvv) \dd \vvv
\right\rvert
\to 0.$$
Thus, $\PP\left( \left. \lceil n T \rceil -nT < u \right\rvert T \leq t \right) \to u$ as $n \to \infty$, since
\begin{eqnarray*}
\lefteqn{ \left\lvert \PP\left( \left. \lceil n T \rceil -nT < u \right\rvert T \leq t \right) - u \right\rvert } \\
&& \leq \left\lvert \PP\left( \left. \lceil n T \rceil -nT < u \right\rvert T \leq t \right) - \sum_{k=1}^\infty \int_{(k-u)/n}^{k/n} f_t (k/n) \dd \vvv\right\rvert \\
&& \qquad\qquad {}+ \left\lvert u \sum_{k=1}^\infty \int_{(k-1)/n}^{k/n} f_t (k/n) \dd \vvv - u \right\rvert \\
&& = \left\lvert \sum_{k=1}^\infty \int_{(k-u)/n}^{k/n} f_t (\vvv) \dd \vvv - \sum_{k=1}^\infty \int_{(k-u)/n}^{k/n} f_t (k/n) \dd \vvv\right\rvert \\
&& \qquad\qquad {}+ u \left\lvert \sum_{k=1}^\infty \int_{(k-1)/n}^{k/n} f_t (k/n) \dd \vvv - \sum_{k=1}^\infty \int_{(k-1)/n}^{k/n} f_t (\vvv) \dd \vvv \right\rvert,
\end{eqnarray*}
which concludes the proof.
\end{proof}

\bibliographystyle{plainnat}
\bibliography{referencias}

\end{document}